\newtheorem{theo}{Theorem}[section]
\newtheorem{lemma}[theo]{Lemma}
\newtheorem{corollary}[theo]{Corollary}
\theoremstyle{definition}
\newtheorem{conj}[theo]{Conjecture}
\newtheorem{ques}[theo]{Question}
\newtheorem{question}[theo]{Question}
\theoremstyle{remark}
\newtheorem{remark}[theo]{Remark}
\newcommand{\dotcup}{\mathbin{\dot\cup}}
\begin{document}

\begin{frontmatter}

\title{Regular colorings and factors of regular graphs\tnoteref{GRWC}}
\tnotetext[GRWC]{This research was partially supported by NSF Grant 1500662 ``The 2015 Rocky Mountain - Great Plains Graduate Research Workshop in Combinatorics.''}

\author[UIUC]{Anton Bernshteyn\fnref{Antonsk}}
\author[UM]{Omid Khormali}
\author[ISU]{Ryan R. Martin\fnref{Ryansk}}
\author[KIT]{Jonathan Rollin}
\author[QU]{Danny Rorabaugh}
\author[VU]{Songling Shan}
\author[UNL]{Andrew J. Uzzell}
\address[UIUC]{Department of Mathematics, University of Illinois at Urbana-Champaign}
\address[UM]{Department of Mathematical Sciences, University of Montana}
\address[ISU]{Department of Mathematics, Iowa State University}
\address[KIT]{Department of Mathematics, Karlsruhe Institute of Technology}
\address[QU]{Department of Mathematics and Statistics, Queen's University}
\address[VU]{Department of Mathematics, Vanderbilt University}
\address[UNL]{Department of Mathematics, University of Nebraska--Lincoln}
\fntext[Antonsk]{Research of this author is supported by the Illinois Distinguished Fellowship.}
\fntext[Ryansk]{This work was supported by a grant from the Simons Foundation (\#353292, Ryan Martin).}

\begin{abstract}
An {\it $(r-1,1)$-coloring} of an $r$-regular graph $G$ is an edge coloring such that each vertex is incident to $r-1$ edges of one color and $1$ edge of a different color.
In this paper, we completely characterize all $4$-regular pseudographs (graphs that may contain parallel edges and loops) which do not have a $(3,1)$-coloring.
An {\it $\{r-1,1\}$-factor} of an $r$-regular graph is a spanning subgraph in which each vertex has degree either $r-1$ or $1$.
We prove various conditions that that must hold for any vertex-minimal $5$-regular pseudographs without $(4,1)$-colorings or without $\{4,1\}$-factors.
Finally, for each $r\geq 6$ we construct graphs that are not $(r-1,1)$-colorable and, more generally, are not $(r-t,t)$-colorable for small $t$.
\end{abstract}

\begin{keyword}
 $r$-regular graph \sep $\{r-1,1\}$-factor \sep $(r-1,1)$-coloring
\end{keyword}

\end{frontmatter}

\section{Introduction}

A graph with no loops or multiple edges is called \emph{simple}; 
a graph in which both multiple edges and loops are allowed is
called a \emph{pseudograph}.
Unless specified otherwise, the word ``graph'' in this paper is reserved for pseudographs.
All (pseudo)graphs considered here are undirected and finite.
Note that we count a loop twice in the degree of a vertex.

The famous Berge--Sauer conjecture asserts that every $4$-regular simple graph contains a $3$-regular subgraph~\cite{MR0411988}.
This conjecture was settled by Tashkinov in 1982~\cite{Tashkinov-4-regular}.  
In fact, he proved that every connected $4$-regular pseudograph with either at most~two pairs of multiple edges and no loops or at most~one pair of multiple edges and at most~one loop contains a $3$-regular subgraph.  
Observe that this cannot hold for all $4$-regular pseudographs, because the graph consisting of a single vertex with two loops contains no $3$-regular subgraph.
The following question remains open.

\begin{ques}\label{QUE:3-reg-subgraph}
Which $4$-regular pseudographs contain $3$-regular subgraphs?
\end{ques}

Note that in 1988, Tashkinov~\cite{Tashkinov2} classified the values of $t$ and $r$ for which every $r$-regular pseudograph contains a $t$-regular subgraph. 
Beyond finding regular subgraphs in regular graphs, finding factors---that is, regular spanning subgraphs---in regular graphs
is also of special interest.  
As early as~$1891$, Petersen~\cite{Petersen} studied the existence of factors in regular graphs.
Since then numerous results on factors have appeared---see, for example, \cite{AKBook, BSW, CG, Plu}.
The concept of factors can be generalized as follows:  
for any set of integers~$S$, an {\it $S$-factor} of a graph is a spanning subgraph in which the degree of each vertex is in $S$.
Several authors~\cite{AK, AR, LWY} have recently studied $\{a,b\}$-factors in $r$-regular graphs with $a+b=r$.
In particular, Akbari and Kano~\cite{AK} made the following conjecture:

\begin{conj}\label{CONJ:a-b-factor}
If $r$ is odd and $0 \leq t \leq r$, then every $r$-regular graph has an $\{r-t,t\}$-factor.
\end{conj}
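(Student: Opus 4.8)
The plan is to reduce the range of $t$, settle the near-regular middle case by an orientation argument, and then attack the extreme cases---which include the $\{r-1,1\}$-factor problem central to this paper---via a minimal-counterexample analysis. First, since the unordered pair $\{r-t,t\}$ is invariant under $t\mapsto r-t$, the conjecture for $t$ and for $r-t$ is literally the same statement, so it suffices to treat $0\le t\le(r-1)/2$; the case $t=0$ is trivial (take $F=G$). As $r$ is odd, $r\abs{V(G)}=\sum_v\deg_G(v)$ is even, forcing $\abs{V(G)}$ to be even---a parity fact used repeatedly below---and since a factor can be assembled componentwise, I may assume $G$ is connected.

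Next I would settle the base case $t=(r-1)/2$, where the target degrees $\{(r-1)/2,(r+1)/2\}$ are consecutive, so the factor is as regular as possible. Because $\abs{V(G)}$ is even, add a (possibly parallel) perfect matching $M$ to obtain an $(r+1)$-regular, hence Eulerian, pseudograph; fix an Eulerian orientation, so that every vertex has in-degree and out-degree exactly $(r+1)/2$. Deleting $M$ changes each out-degree by at most $1$, so in $G$ every vertex has out-degree in $\{(r-1)/2,(r+1)/2\}$, and the set of out-oriented edges is the desired $\{(r-1)/2,(r+1)/2\}$-factor. This disposes of $t=(r-1)/2$ for all odd $r$.

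The remaining, genuinely hard, range is $1\le t<(r-1)/2$, whose endpoint $t=1$ is exactly the $\{r-1,1\}$-factor problem at the heart of this paper. Here I would fix $t$ and argue by contradiction from a vertex-minimal counterexample $G$. Choosing a target bipartition $V(G)=L\dotcup H$ (the vertices of intended degree $t$ and $r-t$) turns the task into finding an ordinary $f$-factor with $f(v)=t$ for $v\in L$ and $f(v)=r-t$ for $v\in H$, for which Tutte's $f$-factor theorem gives a checkable deficiency obstruction; the extra freedom is that I may also choose $(L,H)$, subject only to the parity constraint---since $t\abs{L}+(r-t)\abs{H}$ must be even and $\abs{V(G)}$ is even---that $\abs{H}$ be even. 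The strategy is then to show that a minimal counterexample must be highly connected (small edge cuts let one split $G$ into smaller $r$-regular graphs by standard gadget constructions, invoke minimality on the pieces, and recombine the factors across the cut) and to combine this with an alternating-trail exchange argument that repairs a near-factor at the few offending vertices.

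The hard part will be precisely this last step. When $t$ is small the allowed set $\{t,r-t\}$ has a gap of $r-2t-1\ge2$ missing values, so the problem lies outside the tractable regime of the general factor (antifactor) theorems of Lov\'asz and Cornu\'ejols, and no monotone augmenting structure is available to push a near-solution to an exact one; one is forced to exploit $r$-regularity directly. In particular, the endpoint $t=1$ requires controlling a \emph{sparse} set of degree-$1$ vertices, which is why this paper instead isolates necessary structural properties of minimal $5$-regular counterexamples rather than resolving the conjecture outright.
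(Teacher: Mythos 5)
There is a fundamental problem here: the statement you set out to prove is false, and the paper never proves it---it is recorded as a conjecture of Akbari and Kano which, as the text immediately following it states, was disproved by Axenovich and Rollin. Theorem~\ref{THM:values}(b) cites the refutation: if $t$ is odd and $(t+1)(t+2)\le r$, there exists an $r$-regular graph with no $\{r-t,t\}$-factor. The paper's own Theorem~\ref{theo:regular-no-coloring} gives explicit counterexamples that are even stronger: for odd $t\le \frac{r}{2}$ and odd $r\ge(t+1)(t+2)$, an $r$-regular graph admitting no $(r-t,t)$-coloring at all (and an $\{r-t,t\}$-factor would yield such a coloring with two colors). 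Bear in mind that ``graph'' in this paper means pseudograph, and the counterexamples live on loops and parallel edges. For instance, with $t=1$ and $r=7$, take vertices $v,u,u_1,u_2$ with three parallel edges from $v$ to each $u_i$, one edge $vu$, two loops at each $u_i$, and three loops at $u$; at least one $u_i$ has all three of its edges to $v$ in one color class, and since loops contribute evenly to color degrees, that vertex can never exhibit the degree pattern $\{6,1\}$. This kills your plan at exactly the point where you defer it: the range $1\le t<(r-1)/2$ of small odd $t$ is not merely ``the genuinely hard part''---it is where the conjecture fails, so your vertex-minimal-counterexample analysis can never terminate in a contradiction, because minimal counterexamples exist. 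What survives in this range, and what the paper actually works on, is the single case $r=5$, $t=1$ (Conjecture~\ref{CONJ:4-1-factor}), which remains open precisely because $(t+1)(t+2)=6>5$.

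Separately, the one case you do claim to finish, $t=(r-1)/2$, is handled by a broken argument. After fixing an Eulerian orientation of $G+M$ and deleting $M$, you conclude that ``the set of out-oriented edges is the desired factor.'' But every edge is out-oriented from its tail, so this set is all of $E(G)$; more to the point, the degree of a vertex $v$ in a spanning subgraph $F$ counts \emph{all} edges of $F$ incident to $v$, not just those oriented out of $v$, so controlling out-degrees does not produce a factor. (The case itself is true: for $t=(r-1)/2$ even it is Akbari--Kano's even-$t$ result in Theorem~\ref{THM:values}(b); for $t$ odd one has $t\ge \frac{r}{3}$ when $r\ge 3$, so their other result applies; and $r=3$ is Tutte's theorem, Theorem~\ref{THM:values}(c). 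The standard correct use of an Eulerian orientation here is to pass to the associated bipartite graph and extract matchings or $2$-factors in Petersen's style, not to identify out-degree with subgraph degree.)
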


However, Axenovich and Rollin~\cite{AR} disproved this conjecture. 
The following theorem summarizes what is known about $\{r-t,t\}$-factors of $r$-regular graphs.
(Note that although intended for simple graphs, the result of Petersen~\cite{Petersen} applies to pseudographs as well.)

\begin{theo}\label{THM:values}
Let $t$ and $r$ be positive integers with $t\le \frac{r}{2}$.
\begin{enumerate}[(a)]
	\item When $r$ is even:
	\begin{itemize}
		\item If $t$ is even, then every $r$-regular graph has a $t$-factor, and thus has an $\{r-t,t\}$-factor (Petersen~\cite{Petersen}).
		\item Every $r$-regular graph of even order has an $\left\{\frac{r}{2}+1,\frac{r}{2}-1\right\}$-factor (Lu, Wang, and Yu~\cite{LWY}).
		\item If $t$ is odd and $t \leq \frac{r}{2} - 2$, then there exists an $r$-regular graph of even order that has no $\{r-t,t\}$-factor (\cite{LWY}). 
		\item If $t$ is odd, then trivially, no $r$-regular graph of odd order has an $\{r-t,t\}$-factor.
	\end{itemize}
	\item When $r$ is odd and $r \geq 5$:
	\begin{itemize}
		\item If $t$ is even, then every $r$-regular graph has an $\{r-t,t\}$-factor (Akbari and Kano~\cite{AK}).
		\item If $t$ is odd and $\frac{r}{3} \leq t$, then every $r$-regular graph has an $\{r-t,t\}$-factor (\cite{AK}).
		\item If $t$ is odd and $(t+1)(t+2) \leq r$, then there exists an $r$-regular graph that has no $\{r-t,t\}$-factor (Axenovich and Rollin~\cite{AR}).
	\end{itemize}
	\item Every $3$-regular graph has a $\{2,1\}$-factor (Tutte~\cite{Tutte-3regular}).
	\end{enumerate}
\end{theo}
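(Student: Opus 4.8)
The plan is to treat Theorem~\ref{THM:values} as a catalog and to organize the argument by the two underlying mechanisms: \emph{construction of factors} (the positive statements) and \emph{obstruction to factors} (the negative statements). The common engine on the positive side is Petersen's $2$-factorization, which I would establish first. For an $r$-regular graph $G$ with $r$ even, every component is connected with all degrees even, hence Eulerian; orienting each component along an Eulerian circuit gives an orientation in which every vertex has in- and out-degree exactly $r/2$. Splitting each vertex $v$ into $v^+, v^-$ and adding an edge $u^+ v^-$ for each arc $u \to v$ produces an $(r/2)$-regular bipartite graph, which by K\"onig's edge-coloring theorem decomposes into $r/2$ perfect matchings; each matching pulls back to a $2$-factor of $G$. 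The union of $t/2$ of these $2$-factors is a $t$-factor whenever $t$ is even, which is a fortiori an $\{r-t,t\}$-factor, settling the first bullet of~(a).

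For the positive statements with $r$ odd (the first two bullets of~(b)) I would recast the existence of an $\{r-t,t\}$-factor as a feasibility problem for a general factor with vertex target set $B_v = \{t,\, r-t\}$ at every $v$, and then invoke a Lov\'asz/Tutte-type $(g,f)$- or parity-factor theorem. Since $B_v$ has the single gap $r-2t$, the relevant feasibility criterion is a Tutte-style deficiency (odd-component) count. When $t$ is even the two targets $t$ and $r-t$ differ in parity, the constraints decouple, and the count is easily seen to be satisfied, recovering the Akbari--Kano even-$t$ case. The genuinely delicate subcase is $t$ odd with $\tfrac{r}{3} \le t$: here $r-t$ is even, $t$ is odd, and the admissible gap $r-2t$ is small relative to $r$, so the deficiency inequality is tight. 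I expect this to be the main obstacle on the positive side, because the bound $\tfrac{r}{3} \le t$ is exactly what forces the relevant count to remain nonnegative, and the parity bookkeeping needs to be carried out carefully.

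For the negative statements I would exhibit explicit $r$-regular graphs carrying a local obstruction. The template is to build a small \emph{gadget} that is almost $r$-regular and is joined to the remainder by a controlled number of boundary edges, arranged so that in any $\{r-t,t\}$-factor the number of boundary edges that are used is forced into a fixed residue class; when $t$ is odd this residue clashes with the parity obtained by summing the degrees over the gadget via the handshake lemma, producing a contradiction. The Lu--Wang--Yu construction (even $r$, odd $t \le \tfrac{r}{2}-2$) and the Axenovich--Rollin construction (odd $r$, $(t+1)(t+2) \le r$) both fit this scheme; the quadratic threshold $(t+1)(t+2)\le r$ reflects how many independent boundary edges the gadget needs in order to pin down the residue, and checking that such a gadget can actually be completed to a \emph{genuinely} $r$-regular graph is the delicate point here.

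Finally, the Lu--Wang--Yu $\{\tfrac{r}{2}+1, \tfrac{r}{2}-1\}$-factor (even $r$, even order) and Tutte's $\{2,1\}$-factor for cubic graphs are each single-technique facts: the former follows from a balanced orientation/edge-coloring argument exploiting even order to split each vertex's degree as evenly as possible off of $r/2$, and the latter from Tutte's factor theory, since every cubic graph has a spanning subgraph with all degrees in $\{1,2\}$ (a union of paths and cycles covering every vertex). The odd-order triviality in~(a) is immediate: for $r$ even and $t$ odd both admissible degrees $r-t$ and $t$ are odd, so a putative factor would have an odd number of odd-degree vertices, contradicting the handshake lemma. In summary, once the $2$-factorization engine and the classical factor theorems are in place, everything is mechanical except the two hard points, namely the tight odd-$t$ existence result for odd $r$ and the explicit gadget constructions underlying the non-existence results.
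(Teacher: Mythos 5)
The paper contains no proof of Theorem~\ref{THM:values} at all: it is explicitly a survey statement, with each bullet attributed to its source (Petersen, Lu--Wang--Yu, Akbari--Kano, Axenovich--Rollin, Tutte), and none of those arguments is reproduced anywhere in the paper. So there is no internal proof to compare against, and your proposal has to be judged as a standalone reconstruction of the cited literature. Parts of it do stand on their own: the Eulerian-orientation/K\"onig proof of Petersen's $2$-factorization (valid for pseudographs, since loops orient to one in-arc and one out-arc) and the resulting $t$-factor for even $t$ is the standard complete argument, and the handshake-lemma observation that an odd-order $r$-regular graph ($r$ even, $t$ odd) cannot have an $\{r-t,t\}$-factor is correct.

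The remaining parts have genuine gaps. First, for both Akbari--Kano cases you appeal to a ``Lov\'asz/Tutte-type'' feasibility criterion for the degree set $B_v=\{t,r-t\}$, but no such criterion exists in this generality: Lov\'asz's general factor theorem applies only when each allowed degree set has gaps of length at most $1$, whereas $\{t,r-t\}$ omits $r-2t-1$ consecutive values --- already a gap of length $2$ for $r=9$, $t=3$, a case the theorem covers --- and for longer gaps there is no Tutte-style deficiency characterization (the general factor problem with such sets is NP-complete), so ``the constraints decouple and the count is easily seen to be satisfied'' is not an argument; you concede the odd-$t$ case is not carried out, but the even-$t$ case rests on the same nonexistent tool. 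Second, your treatment of Tutte's $\{2,1\}$-factor theorem is circular: ``every cubic graph has a spanning subgraph with all degrees in $\{1,2\}$'' is precisely the statement to be proved, and it does not follow from minimum degree $\geq 1$ alone ($K_{1,3}$ has no $\{1,2\}$-factor), so a genuinely cubic-specific argument is required. Third, the two non-existence bullets are given only as a gadget template: no $r$-regular graph is exhibited, the Lu--Wang--Yu example must in addition have \emph{even order}, and you acknowledge that completing the gadget to an $r$-regular graph is unchecked --- yet that completion is exactly where the thresholds $t\le \frac{r}{2}-2$ and $(t+1)(t+2)\le r$ enter, so the mathematical content of those results is missing. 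Finally, the sketch for the $\left\{\frac{r}{2}+1,\frac{r}{2}-1\right\}$-factor never engages with the only nontrivial case: when $\frac{r}{2}$ is odd one just takes $\frac{1}{2}\left(\frac{r}{2}-1\right)$ of Petersen's $2$-factors, while when $\frac{r}{2}$ is even both target degrees are odd and the even-order hypothesis becomes essential; ``split each vertex's degree as evenly as possible'' does not address that parity obstruction.
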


The first case of Conjecture~\ref{CONJ:a-b-factor} that Theorem~\ref{THM:values} does not address is when $r = 5$ and $t = 1$.  As we will give much of our attention to this case, we restate it separately.

\begin{conj}\label{CONJ:4-1-factor}
Every $5$-regular graph has a $\{4,1\}$-factor.
\end{conj}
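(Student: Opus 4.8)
The plan is to reduce the problem to a search over a single well-understood factor condition. First I would record the key reformulation: a $\{4,1\}$-factor is nothing but an $f$-factor for a suitable $\{1,4\}$-valued degree prescription. Precisely, $G$ has a $\{4,1\}$-factor if and only if there is a set $A\subseteq V(G)$ such that $G$ admits an $f$-factor for the function $f$ with $f\equiv 1$ on $A$ and $f\equiv 4$ on $B:=V(G)\setminus A$; the $f$-factor is then exactly the desired spanning subgraph $H$, with $A$ its set of degree-$1$ vertices. Two immediate consequences guide the construction. The degree sum of such an $H$ is $\abs{A}+4\abs{B}=4n-3\abs{A}$, which is even precisely when $\abs{A}$ is even (and $n$ is even since $G$ is $5$-regular); this is also the parity condition required by Tutte's $f$-factor theorem. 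Moreover the complement $E(G)\setminus E(H)$ is again a $\{4,1\}$-factor, with the roles of $A$ and $B$ exchanged, so the problem is self-dual. Finally, the case $A=\emptyset$ is permitted, and there $f\equiv 4$ is the complement of a perfect matching: hence if $G$ has a perfect matching $M$, then $G-M$ is a $4$-regular (so $\{4,1\}$-) factor and we are done. The entire content of the conjecture therefore lies in $5$-regular graphs with no perfect matching, and the degree-$1$ vertices must be deployed exactly to absorb the obstruction that blocks such a matching.

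Assuming $G$ has no perfect matching, I would treat the choice of $A$ as the central combinatorial object and invoke Tutte's $f$-factor theorem for each fixed $A$. Starting from $A=\emptyset$ and $f\equiv 4$, the failure of a perfect matching produces a Tutte barrier $(S,T)$ whose deficiency records which components of $G-(S\cup T)$ have the wrong parity; equivalently, one may run the Gallai--Edmonds decomposition $V(G)=D\cup A_0\cup C$, where $A_0=N_G(D)\setminus D$, each component of $G[D]$ is factor-critical, and $G[C]$ has a perfect matching. The plan is then to add degree-$1$ vertices one offending component at a time: in a factor-critical odd component one designates a single exposed vertex as a degree-$1$ vertex of $H$ (keeping one edge toward $A_0$), which flips the parity of that component and strictly decreases the $f$-factor deficiency, while leaving $\abs{A}$ even by treating components in pairs. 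Iterating this repair, and closing up the remaining vertices of $C$ and $A_0$ to degree exactly $4$ by alternating-path rerouting at any vertex whose current degree has fallen to $2$ or $3$, should produce a valid $f$-factor for the final prescription.

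The hard part, and the reason this stays a conjecture, is that $\{1,4\}$ is a degree set with a gap of length $2$: the general $\{1,4\}$-factor problem lies outside the Lov\'asz--Cornu\'ejols class of gap-at-most-one degree-constrained subgraph problems, so no single $f$-factor covers it and there is no off-the-shelf min--max characterization—indeed the general version is $\mathrm{NP}$-hard. Consequently the search over $A$ is genuinely global and non-monotone: adding a degree-$1$ vertex to repair one barrier can create a new barrier elsewhere, and proving that the repair always terminates with an admissible even set $A$ is precisely the open content, which must be extracted from $5$-regularity together with the rigidity of the Gallai--Edmonds structure. The pseudograph setting sharpens the difficulty, since loops (counted twice) and parallel edges distort the Tutte parity counts and can block the rerouting step, exactly the local configurations that obstruct a perfect matching in the first place. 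For this reason I would expect a complete argument to proceed, as in the present paper, by first forcing strong structural restrictions on a vertex-minimal $5$-regular counterexample and only then eliminating it.
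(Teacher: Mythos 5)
The statement you are addressing is Conjecture~\ref{CONJ:4-1-factor}, which is \emph{open}: the paper contains no proof of it, only structural restrictions (Theorems~\ref{minNo41factor} and~\ref{le:5regDEAL-factor}) that a vertex-minimal counterexample must satisfy. Your proposal, by its own admission, is not a proof either, so there is a genuine gap --- and the gap is the entire core of the argument. The preliminary reductions you make are correct: the parity computation showing $\abs{A}$ must be even, the self-duality under complementation, and the observation that a perfect matching yields a $4$-factor (this last point is exactly how the paper exploits Corollary~\ref{corollary:2kEdgeConnected} to show a minimal counterexample cannot be $4$-edge-connected). But the central step --- the iterative ``repair'' that is supposed to terminate in an admissible even set $A$ --- is not an argument. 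Concretely: (i) changing the prescription $f$ from $4$ to $1$ at one exposed vertex alters the Tutte $f$-factor condition globally, and you give no proof that the deficiency strictly decreases; a new barrier $(S,T)$ can arise involving entirely different parts of the graph, a non-monotonicity you concede yourself; (ii) ``treating components in pairs'' to keep $\abs{A}$ even and ``alternating-path rerouting'' at vertices whose degree has fallen to $2$ or $3$ are not defined operations, and the latter is precisely where loops and parallel edges break naive augmenting-path arguments in pseudographs; (iii) your appeal to the NP-hardness of the general $\{1,4\}$-factor problem cuts against the strategy rather than for it --- it indicates that no barrier-by-barrier search over $A$ can succeed unless $5$-regularity is used in an essential structural way, which your sketch never does.

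To be clear about the comparison you were asked for: the paper itself does not prove this statement, so the honest verdict is that your text is a reasonable research plan plus a correct diagnosis of the difficulty, aligned in spirit with the paper's actual strategy (vertex-minimal counterexample, local reductions to smaller $5$-regular graphs, Tutte's $1$-factor theorem), but it establishes nothing beyond the trivial reductions. If you wanted to contribute in the paper's framework, the productive direction would be to convert your Gallai--Edmonds observations into \emph{new necessary conditions} on a minimal counterexample --- for instance, sharpening the loop-counting argument of Theorem~\ref{le:5regDoubleEdgesAndLoops}~(\ref{5LoopMin}) / Theorem~\ref{le:5regDEAL-factor}~(\ref{5LoopMin-factor}), which is itself a Tutte-barrier argument of the kind you describe --- rather than asserting a repair procedure whose termination is exactly the open question.
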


An {\it $(r-t,t)$-coloring} of an $r$-regular graph $G$ is an edge-coloring (with at least~two colors) such that each vertex is incident to $r-t$ edges of one color and $t$ edges of a different color.  An \emph{ordered $(r-t,t)$-coloring} of $G$ is an $(r-t,t)$-coloring using integers as colors 
such that each vertex is incident to $r-t$ edges of some color $i$ and $t$ edges of some color $j$ with $i < j$.  Bernshteyn~\cite{B} introduced $(3, 1)$-colorings as an approach to answering Question~\ref{QUE:3-reg-subgraph}.  The advantage of working with $(3,1)$-colorings is that this notion is ``global'' (i.e., there is a condition at each vertex),
	while the presence of a $3$-regular subgraph is a ``local'' notion (a large $4$-regular graph can contain a small $3$-regular subgraph).  Bernshteyn proved the following.

\begin{theo}[Bernshteyn~\cite{B}] \label{3-regular-ordered-coloring}
		A connected $4$-regular graph contains a $3$-regular subgraph if and only if it admits an ordered $(3,1)$-coloring.
\end{theo}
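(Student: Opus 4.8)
The statement is an equivalence whose two directions have very different difficulty, and the plan is to dispose of the backward direction first. Suppose $G$ admits an ordered $(3,1)$-coloring and let $c$ be the smallest integer used as a color. I claim the subgraph of $G$ formed by all edges colored $c$ is $3$-regular. Indeed, take any vertex $v$ incident to a color-$c$ edge: at $v$ the color $c$ is either the majority color (on $3$ edges) or the minority color (on $1$ edge), but if it were the minority color then the majority color at $v$ would be some $i<c$, contradicting minimality of $c$. Hence $c$ is the majority color at every vertex it touches, so each endpoint of a color-$c$ edge is incident to exactly three color-$c$ edges, and the color-$c$ edges form a (nonempty) $3$-regular subgraph. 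Note that connectivity is not needed here.

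For the forward direction, fix a $3$-regular subgraph $H$, put $S=V(H)$ and $T=V(G)\setminus S$. If $S=V(G)$, then $G-E(H)$ is a perfect matching $M$, and coloring $E(H)$ with $1$ and $M$ with $2$ is an ordered $(3,1)$-coloring. In general I would first reformulate the target: an ordered $(3,1)$-coloring is exactly a partition of $E(G)$ into subgraphs $C_1,C_2,\dots$ (indexed by the colors in increasing order), each with all degrees in $\{0,1,3\}$, such that every vertex has degree $3$ in exactly one class (its majority color) and degree $1$ in exactly one \emph{later} class (its minority color). I would set $C_1=E(H)$, which assigns majority color $1$ to all of $S$ and leaves the graph $F=G-E(H)$, in which each vertex of $S$ has degree $1$ and each vertex of $T$ has degree $4$. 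The single place connectivity enters is the invariant that no connected component of $F$ lies entirely inside $T$: the degree-$4$ vertices keep all their $G$-edges, so such a component would be a component of $G$ disjoint from $S\neq\varnothing$.

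The core of the argument is then an induction that peels the color classes off one at a time. Having resolved some vertices, I look for the next class $C$: a subgraph of the current residual graph with degree at most $1$ at every vertex that currently has degree $1$ and degree in $\{0,3\}$ at every vertex that currently has degree $4$, and giving degree $3$ to a \emph{nonempty} set $P\subseteq T$. A short check shows that deleting such a $C$ automatically preserves the no-component-inside-$T$ invariant (the remaining degree-$4$ vertices keep all their edges, so a forbidden component would already be one in $F$), while $\abs{T}$ strictly decreases; iterating resolves every vertex and yields the layered coloring. Thus everything reduces to one key lemma: a nonempty next layer always exists.

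Proving this key lemma is the main obstacle. Giving each vertex of $P$ degree exactly $3$ using only edges inside $P$ and edges from $P$ to the degree-$1$ vertices is a degree-constrained-subgraph problem: each vertex of $P$ may send at most one edge out to $T\setminus P$, and the "held-out" edges must form a matching saturating those vertices of $P$ that have no edge into $T\setminus P$. When $T$ admits a matching saturating it inside $F$ one may take $P=T$; in general I would invoke the Gallai--Edmonds (equivalently Tutte--Berge) structure of $F$ and defer the deficient vertices---one from each odd factor-critical component lying entirely in $T$---to a higher color class, repairing the matching deficiency. It is precisely here that connectivity is used (a deferred vertex still attaches to the rest of the graph, so the invariant survives) and precisely here that the unbounded ordered palette is essential: a single near-perfect matching, i.e. a two-coloring, genuinely may fail to exist, so one cannot avoid cascading through several strictly increasing majority colors. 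I expect the delicate point to be organizing the deferral so that the degree and matching constraints defining a legal layer hold simultaneously at every step.
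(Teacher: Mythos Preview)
The paper does not prove this theorem at all: it is quoted from Bernshteyn~\cite{B} and used as a black box, so there is no ``paper's own proof'' to compare against. I will therefore assess your sketch on its merits.

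Your backward direction is complete and correct. For the forward direction your layered framework is sound: setting $C_1=E(H)$, maintaining the invariant that no component of the residual lies entirely inside the current set of degree-$4$ vertices, and observing that the invariant propagates because the surviving degree-$4$ vertices retain all of their $G$-edges --- all of this is fine and is exactly where connectivity is consumed. The termination argument (once the degree-$4$ set is empty the residual is a matching, which becomes the final layer) is also correct.

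The genuine gap is the ``key lemma'' itself, and your Gallai--Edmonds outline does not close it as stated. You correctly identify the two simultaneous constraints a legal layer imposes on $P$: (i) each $p\in P$ has at most one edge into $T\setminus P$, and (ii) the held-out edges form a matching saturating $P_0=\{p\in P: p\text{ has no edge to }T\setminus P\}$ inside $F_k[P_0\cup D]$. Your proposed fix---run Gallai--Edmonds on $F_k$, take a maximum matching, and defer one vertex from each odd factor-critical component---addresses (ii) but says nothing about (i). A non-deferred vertex $p$ lying in the same factor-critical component as a deferred vertex $q$ may well have two or more edges to $q$ (we are in a pseudograph), and then $p$ has $\geq 2$ edges into $T\setminus P$, so no subgraph $C$ can give $p$ degree $3$ while giving $q$ degree $0$. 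You would need either to argue that the deferred vertex in each component can be chosen so that no remaining vertex sends it a multi-edge, or to allow $P$ to shrink further and show the process still makes progress; neither step is in your sketch. Until that interaction between the multiplicity constraint (i) and the matching constraint (ii) is handled, the forward direction remains a plan rather than a proof.
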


We observe that the notion of an $(r - t, t)$-coloring of an $r$-regular graph generalizes that of an $\{r-t,t\}$-factor, because $\{r-t,t\}$-factors correspond to $(r-t,t)$-colorings that use exactly~two colors.  (In an $r$-regular graph with $0<t<r$, $t$-factors correspond to \emph{ordered} $(r-t,t)$-colorings that use exactly~two colors.)  Thus, $(r-t, t)$-colorings provide a common approach to attacking Question~\ref{QUE:3-reg-subgraph} and Conjecture~\ref{CONJ:4-1-factor}.  This leads us to ask whether the following weaker version of Conjecture~\ref{CONJ:4-1-factor} holds.

\begin{ques}\label{QUE:4-1-coloring}
Does every $5$-regular graph have a $(4,1)$-coloring?
\end{ques}

For $r \geq 6$, the answer to the analogue of Question~\ref{QUE:4-1-coloring} for $(r-1, 1)$-colorings is negative (see Section~\ref{6reg}).

Similarly, Theorem~\ref{3-regular-ordered-coloring} motivates the following weaker version of Question~\ref{QUE:3-reg-subgraph}.

\begin{ques}\label{QUE:3-1-coloring}
Which $4$-regular graphs have $(3, 1)$-colorings?
\end{ques}

The arrows in Figure~\ref{Relation} indicate the relationships among $t$-factors, $\{r-t,t\}$-factors,
ordered $(r-t,t)$-colorings, $(r-t,t)$-colorings, and $t$-regular subgraphs of $r$-regular graphs. 

 \begin{figure}[ht]
	\centering
	
	\begin{tikzpicture}
		\draw (0,0) node {$G$ has a $t$-factor.};
		\draw[double,->] (0,.2) -- (-2.2,.8);
		\draw[double,->] (0,.2) -- (2.2,.8);
		\draw (-2.8,1) node {$G$ has an $\{r-t,t\}$-factor.};
		\draw (2.8,1) node {$G$ has an ordered $(r-t,t)$-coloring.};
		\draw[double,->] (2.8,1.2) -- (2.8,1.8);
		\draw[double,->] (1.1,1.2) -- (-1.1,1.8);
		\draw[double,->] (-2.8,1.2) -- (-2.8,1.8);
		\draw (2.8,2) node {$G$ has a $t$-regular subgraph.};
		\draw (-2.8,2) node {$G$ has an $(r-t,t)$-coloring.};
	\end{tikzpicture}
\label{Relation}
	\caption{Implications that hold for every $r$-regular graph $G$ and for all integers $0< t < r$.}	
\end{figure}
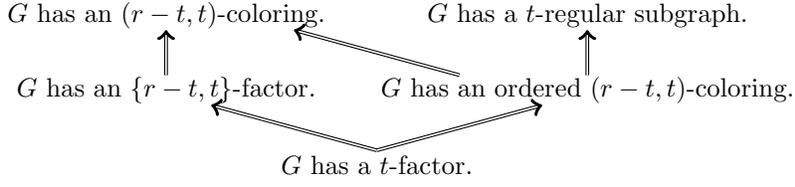

Now we are ready to describe our main results.
First, in Section~\ref{4reg}, we characterize all $4$-regular graphs which are not $(3,1)$-colorable, which settles Question~\ref{QUE:3-1-coloring}.  Because the statement of the result requires additional definitions, we postpone it until then (see Theorem~\ref{THM:4-regular-no-3-1}).  Then, in Section~\ref{5reg}, we make progress toward settling Conjecture~\ref{CONJ:4-1-factor} and Question~\ref{QUE:4-1-coloring} by proving several conditions on vertex-minimal $5$-regular graphs without $(4,1)$-colorings and $\{4,1\}$-factors.
Finally, in Section~\ref{6reg}, we construct relevant examples of $r$-regular graphs for $r \geq 6$ and various $t$: some with no $(r-t,t)$-coloring, others with an $(r-t,t)$-coloring but no $\{r-t,t\}$-factor.

\section{\texorpdfstring{$(3,1)$-colorings in $4$}{(3, 1)-colorings in 4}-regular graphs} \label{4reg}

In this section, we characterize $4$-regular graphs that do not admit $(3, 1)$-colorings.

Let us first establish some terminology.    Let $G_1$ and $G_2$ be vertex-disjoint graphs with edges $e_1=u_1v_1 \in E(G_1)$ and~$e_2=u_2v_2 \in E(G_2)$. 
    The \emph{edge adhesion} of $G_1$ and $G_2$ at $e_1$ and $e_2$ is the graph $G = (G_1, e_1) + (G_2, e_2)$ obtained by subdividing edges $e_1$ and~$e_2$ and identifying the two new vertices. 
    (See Figure~\ref{fig:EdgeAdhere}.) 
    That is,
    \begin{eqnarray*}
        V(G) & = & V(G_1) \dotcup V(G_2) \dotcup \{w\}; \\
        E(G) & = & (E(G_1)\setminus \{e_1\}) \dotcup (E(G_2)\setminus\{e_2\}) \dotcup \{u_1w,v_1w,u_2w,v_2w\}.
    \end{eqnarray*}
    
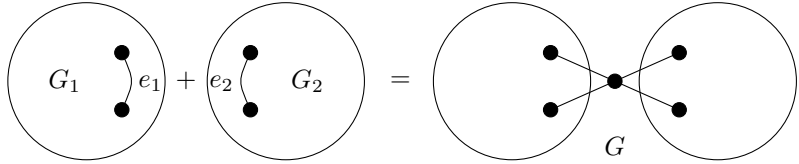
\begin{figure}[ht]
    \centering
    \begin{tikzpicture}[scale=.95]
    	\foreach \x in {-.9,.9}{
    		\draw (1.55*\x,0) circle(11mm);
    		\foreach \y in {-.4,.4}{ \filldraw (\x,\y) circle(1mm); };
    		\draw (\x,-.4) ..controls (.8*\x,0) .. (\x,.4);
    	}
        \draw (0,0) node{$e_1 \; + \; e_2$};
        \draw (-1.7,0) node{$G_1$};
        \draw (1.7,0) node{$G_2$};
        
        \draw (3,0) node{$=$};
        
        \begin{scope}[xshift=6cm]
	    	\foreach \x in {-.9,.9}{
	    		\draw (1.6*\x,0) circle(11mm);
	    		\foreach \y in {-.4,.4}{ 
	    			\filldraw (\x,\y) circle(1mm);
	    			\draw (\x,\y) -- (0,0);
			 };
	    	}
	    	\filldraw (0,0) circle(1mm);
	    	\draw (0,-.9) node{$G$}; 
        \end{scope}
    \end{tikzpicture}
    \caption{Edge adhesion of two graphs, $G=(G_1,e_1)+(G_2,e_2)$.} \label{fig:EdgeAdhere}
\end{figure}
    
    The \emph{adhesion of a loop} to graph $H$ at edge $e = uv \in E(H)$ is the graph $H' = (H,e) + O$ obtained by subdividing $e$ and adding a loop at the new vertex.  (See Figure~\ref{fig:AddLoop}.) That is,
    \begin{eqnarray*}
        V(H') & = & V(H) \dotcup \{x\}; \\
        E(H') & = & (E(H)\setminus \{e\})\dotcup \{ux,vx,xx\}.
    \end{eqnarray*}

\begin{figure}[ht]
    \centering
    \begin{tikzpicture}[scale=.93]
        \draw (.4,.1) circle(11mm);
        \filldraw (0,0) circle(1mm);
        \filldraw (.6,.8) circle(1mm);
        \draw (0,0) .. controls (.38,.34) .. (.6,.8);
        \draw (.44,.41)  node[right,below]{$e$};
        \draw (.4,-.7) node{$H$};
        
        \draw (2.25,0) node{$+\;O = $};
        
        \begin{scope}[xshift=3.6cm]
            \draw (.5,.1) circle(11mm);
            \filldraw (0,0) circle(1mm) --(.38,.34) circle(1mm) --(.6,.8) circle(1mm);
            \draw (.38,.34) .. controls (.38,-.46) and (1.18,.34) .. (.38,.34);
      	  \draw (.5,-.7) node{$H'$};
        \end{scope}
    \end{tikzpicture}
    \caption{Adhesion of a loop at an edge, $H'=(H,e)+O$.} \label{fig:AddLoop}
\end{figure}
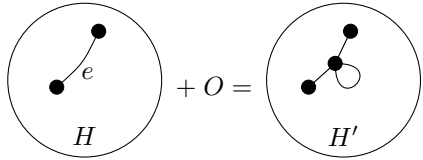

Let $C$ be a (simple) cycle. 
A {\it double cycle} is obtained from $C$ by doubling each edge. 
We say a double cycle is even (respectively, odd) if it has an even (respectively, odd) number of vertices. 
(See Figure \ref{fig:DoubleCycles}.)

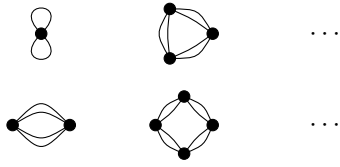
\begin{figure}[ht]
    \centering
    \begin{tikzpicture}[scale=.76]
        \filldraw (0,0) circle(1mm);
        \draw (0,0) .. controls (-.6,.6) and (.6,.6) .. (0,0);
        \draw (0,0) .. controls (-.6,-.6) and (.6,-.6) .. (0,0);
        \begin{scope}[xshift=25mm]
            \foreach \x in {0,120,240}{
                \filldraw (\x:.5) circle(1mm);
                \draw (\x:.5) .. controls (\x+60:.5) .. (\x+120:.5);
                \draw (\x:.5) .. controls (\x+60:.3) .. (\x+120:.5);
            };
        \end{scope}
        \draw (5,0) node {$\cdots$};
        \begin{scope}[yshift=-16mm]
            \foreach \x in {0,180}{
                \filldraw (\x:.5) circle(1mm);
                \draw (\x:.5) .. controls (\x+90:.5) .. (\x+180:.5);
                \draw (\x:.5) .. controls (\x+90:.3) .. (\x+180:.5);
            };
            \begin{scope}[xshift=25mm]
                \foreach \x in {0,90,180,270}{
                    \filldraw (\x:.5) circle(1mm);
                    \draw (\x:.5) .. controls (\x+45:.5) .. (\x+90:.5);
                    \draw (\x:.5) .. controls (\x+45:.3) .. (\x+90:.5);
                };
            \end{scope}
            \draw (5,0) node {$\cdots$};
        \end{scope}
    \end{tikzpicture}
    \caption{Double cycles (odd on top, even on bottom).} \label{fig:DoubleCycles}
\end{figure}

Clearly, double cycles and graphs resulting from edge adhesion of two $4$-regular graphs or from the adhesion of a loop to a $4$-regular graph are all $4$-regular.  
We are now ready to give the main result of this section.

\begin{theo}\label{THM:4-regular-no-3-1}
	A connected $4$-regular graph is not $(3,1)$-colorable if and only if it can be constructed from odd double cycles via a sequence of edge adhesions.
\end{theo}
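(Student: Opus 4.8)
The plan is to prove both implications, treating the ``if'' direction as a backward-closure property and the ``only if'' direction by induction on a cut-vertex decomposition. For the ``if'' direction I would first dispatch the base case directly: an odd double cycle has no $(3,1)$-coloring. Writing the two parallel edges between consecutive vertices $v_i,v_{i+1}$ as a block $B_i$, at each vertex the single minority edge lies in $B_{i-1}$ or $B_i$. A short local analysis shows that if the minority edge at $v_i$ lies in $B_i$, then both edges of $B_{i-1}$ are majority-colored, which forces the minority edge at $v_{i-1}$ to lie in $B_{i-2}$, and conversely; hence the forward/backward ``side'' of the minority edge must alternate around the cycle, which is possible only when the cycle is even. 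I would then show that $(3,1)$-colorability propagates backward under edge adhesion: in any $(3,1)$-coloring of $G=(G_1,e_1)+(G_2,e_2)$, the vertex $w$ has three majority edges of a single color $g$, so the two edges of $w$ on the side \emph{not} carrying the minority edge both receive $g$; un-subdividing that side (coloring the corresponding $e_i$ with $g$) produces a genuine $(3,1)$-coloring of that $G_i$. Thus a $(3,1)$-coloring of $G$ forces one of $G_1,G_2$ to be $(3,1)$-colorable, and induction gives that every graph built from odd double cycles via edge adhesions is not $(3,1)$-colorable.

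For the ``only if'' direction I would induct on $|V(G)|$ and split $G$ at a cut vertex. The enabling structural fact is that a $4$-regular pseudograph has no bridge (removing one would leave a subgraph with an odd degree sum), so every cut vertex $w$ is loopless and has its four edges split $2+2$ between exactly two components of $G-w$; adding an edge joining the two $w$-neighbors on each side realizes $G$ as an edge adhesion $(G_1,e_1)+(G_2,e_2)$ with $G_1,G_2$ connected, $4$-regular, and smaller. Conversely, edge adhesions are precisely the graphs possessing a cut vertex, so a graph with no cut vertex is $2$-connected and the induction bottoms out there; reassembling odd double cycles along cut vertices is then immediate. The crux reduces to two claims: that both pieces $G_1,G_2$ of a non-$(3,1)$-colorable $G$ are themselves non-$(3,1)$-colorable, and that a $2$-connected non-$(3,1)$-colorable graph is an odd double cycle.

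To settle the first claim I would introduce a relaxed notion: a \emph{stub-coloring} of $(G_i,e_i)$ colors $G_i-e_i$ together with two ``stub'' colors at the ends of $e_i$, meeting the $(3,1)$-condition at every vertex, and it is of \emph{split} type when the two stub colors differ. Examining the color pattern at $w$ shows that $G=(G_1,e_1)+(G_2,e_2)$ is $(3,1)$-colorable if and only if one side has a genuine $(3,1)$-coloring and the other a split stub-coloring. The linchpin lemma I would then prove is that \emph{every} connected $4$-regular $(G_i,e_i)$ has a split stub-coloring; granting it, the colorability of an adhesion collapses to the colorability of one of its two sides, so $G$ is non-colorable exactly when both pieces are, and the induction closes. (This also clarifies why loop adhesions need not appear in the statement: adhering a loop to $(H,e)$ is always colorable, because one extends a split stub-coloring of $(H,e)$ across the loop by setting the loop color equal to one stub.) I expect to establish the split stub-coloring lemma by a direct two-coloring of $G_i-e_i$ in which one stub is arranged to be a majority edge and the other a minority edge—so the stub colors differ automatically—exploiting that $G_i-e_i$ is connected with exactly the two odd-degree vertices $u_i,v_i$.

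The step I expect to be the main obstacle is the second claim, that a $2$-connected $4$-regular graph with no $(3,1)$-coloring is an odd double cycle. Here I would use the reformulation that a $(3,1)$-coloring is equivalent to choosing one minority edge-end at each vertex so that no edge which is minority at exactly one end joins two vertices linked by a path of edges that are majority at both ends; in particular any perfect matching yields a coloring (color the matching one color and its complement another). The task is then to show that a $2$-connected $4$-regular graph that is not an odd double cycle admits such a minority choice. I anticipate the real work is a careful case analysis proving that if some vertex fails to be incident to two parallel pairs of edges—so the graph locally does not look like a double cycle—then a valid minority choice can be built (for instance from a suitable near-matching, or by locally rerouting a $2$-factor guaranteed by Petersen's theorem), whereas if every vertex \emph{is} incident to two parallel pairs then $G$ is a double cycle and the parity argument above forces odd length. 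Making this dichotomy precise, and in particular controlling the global ``majority-component'' condition while adjusting purely local choices, is where I expect the difficulty to concentrate.
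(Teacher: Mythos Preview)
Your linchpin lemma is false, and this is a genuine gap rather than a technicality. Take $G$ to be the two-vertex graph with one loop at each vertex and two parallel edges between them (this is $(3,1)$-colorable), and take $e$ to be one of the loops. Then $(G,e)+O$ is the three-vertex ``chain'' $u\text{--}w\text{--}a$ with a loop at each end and a double edge at each link; equivalently, it is the edge adhesion of two one-vertex odd double cycles, and a direct check (at the end vertex the loop forces the two link-edges to receive different colors, whence the middle vertex sees two disjoint $2$-sets of colors) shows it has \emph{no} $(3,1)$-coloring. So $(G,e)$ has no split stub-coloring at all, let alone a two-colored one. This also breaks your parenthetical claim that loop adhesion always produces a colorable graph. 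Worse, adhering two copies of this $(G,e)$ gives a non-$(3,1)$-colorable graph whose decomposition at the central cut vertex has \emph{both} pieces $(3,1)$-colorable, so the assertion ``$G$ non-colorable $\Rightarrow$ both pieces non-colorable'' fails for that particular cut vertex; the induction as you set it up does not close.

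The paper's proof confronts exactly this obstacle. It uses Bernshteyn's lemmas (via the equivalence with $3$-regular subgraphs) to get only the weaker statement that if \emph{exactly one} of $G_1,G_2$ is colorable then $(G_1,e_1)+(G_2,e_2)$ is colorable. When both $G_1,G_2$ are colorable it does \emph{not} derive a contradiction directly; instead it shows $(G_1,e_1)+O$ is then non-colorable, applies the inductive hypothesis to \emph{that} smaller graph to write it as an adhesion of two non-colorable pieces, and uses the piece not containing the subdivided $e_1$ to exhibit a \emph{different} cut-vertex decomposition of $G$ into two non-colorable parts. For the $2$-connected base case the paper does not use matchings or Petersen-type arguments at all: it invokes the ready-made lemma that a $4$-regular graph with a non-double, non-loop edge $uv$ such that $G-\{u,v\}$ is connected is automatically $(3,1)$-colorable, and derives from $2$-connectivity that such an edge exists unless $G$ is a double cycle. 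Your plan would need a replacement for the false linchpin lemma (e.g., proving it only for non-colorable $G_i$, which is essentially the paper's Corollary, and then handling the ``both colorable'' case separately), and a concrete mechanism for the $2$-connected step.
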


\begin{remark}\label{re:4-regular-no-3-1-inductive}
Theorem~\ref{THM:4-regular-no-3-1} naturally lends itself to a proof by induction.  In particular, an equivalent statement is that a connected $4$-regular graph is not $(3,1)$-colorable if and only if it is an odd double cycle or obtained from two $4$-regular, non-$(3,1)$-colorable graphs by a sequence of edge adhesions.
\end{remark}

Before we prove Theorem~\ref{THM:4-regular-no-3-1}, we need to develop a few lemmas.

\begin{lemma} \label{lemma:Cycles}
    A double cycle with $n \geq 1$ vertices is $(3,1)$-colorable if and only if $n$ is even.
\end{lemma}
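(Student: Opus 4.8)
The plan is to track, at each vertex, how the three majority-colored edges distribute among the two parallel pairs of edges meeting there, and to read off a parity condition around the cycle. Write the underlying simple cycle as $v_1 v_2 \cdots v_n$ with indices modulo $n$, and for each $i$ let $P_i$ denote the pair of parallel edges joining $v_i$ and $v_{i+1}$ in the double cycle (for $n=1$, let $P_1$ be the pair of loops at the single vertex). For $n \geq 2$ each vertex $v_i$ is incident to exactly the two pairs $P_{i-1}$ and $P_i$. Given a coloring, call a pair \emph{monochromatic} if its two edges receive the same color, and \emph{split} otherwise.

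First I would establish a local structure lemma. Fix a $(3,1)$-coloring and a vertex $v_i$ with $n \ge 2$. Its four incident edges split into the pairs $P_{i-1}$ and $P_i$, so any three edges sharing a color must consist of both edges of one pair together with a single edge of the other pair. Hence exactly one of $P_{i-1}, P_i$ is monochromatic (carrying the majority color on both edges) while the other is split (its off-color edge being the unique minority edge at $v_i$). In particular, it is impossible for both incident pairs to be monochromatic (that would make all four edges one color, or two of each color) and impossible for both to be split (two split pairs contribute at most one edge of any single color each, hence at most two, never three). I would stress that this argument is purely local and makes no assumption that the coloring uses only two colors globally, which is the subtlety one must respect since a $(3,1)$-coloring may use many colors overall.

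For the forward (``only if'') direction this yields a parity obstruction. Since $P_{i-1}$ and $P_i$ are the two pairs at $v_i$ and exactly one of them is monochromatic, the labels monochromatic/split must alternate along the cyclic sequence $P_1, P_2, \ldots, P_n$; a cyclic alternation of two labels exists only when $n$ is even, so no $(3,1)$-coloring can exist when $n \ge 3$ is odd. The case $n=1$ (a single vertex with two loops) I would dispatch directly: each loop contributes an even number, namely $2$, to the incidence count of its color at the vertex, so every color appears an even number of times and the counts can never be $3$ and $1$. For the reverse (``if'') direction I would simply exhibit a two-coloring realizing the alternation: with $n$ even, color both edges of $P_i$ red when $i$ is odd, and color one edge of $P_i$ red and the other blue when $i$ is even. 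Then every vertex meets one all-red pair and one red/blue pair, so it sees exactly three red edges and one blue edge, and since $n \ge 2$ both colors occur, giving a valid $(3,1)$-coloring.

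The main obstacle is not any single computation but the careful handling of the degenerate endpoints together with the multi-color caveat. One must phrase the ``exactly one monochromatic incident pair'' observation so that it holds no matter how many colors the coloring uses, and one must treat $n=1$ separately, where the pair structure degenerates into loops and the proof becomes a parity count rather than an alternation around a cycle.
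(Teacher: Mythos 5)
Your proposal is correct and is essentially the paper's own argument: the paper also contracts each parallel pair to an edge of the underlying cycle, labels it by whether the pair is monochromatic or split, observes that a $(3,1)$-coloring forces these labels to alternate at every vertex (your local structure lemma), and derives the odd-cycle parity contradiction, while the even case is likewise settled by a perfect matching (which is exactly what your explicit alternating construction produces). The only difference is presentational: you spell out the local lemma and the degenerate $n=1$ case explicitly, which the paper leaves implicit.
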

\begin{proof}Even double cycles have perfect matchings and are thus $(3,1)$-colorable.

Assume that there is a $(3,1)$-coloring $c$ of an odd double cycle $G$.
Let $G'$ denote the cycle obtained by removing one of the parallel edges between any two adjacent vertices in $G$.
Color an edge in $G'$ red if the corresponding edges in $G$ are of the same color under $c$ and blue otherwise.
Observe that the edges incident to any vertex in $G'$ are of different colors, since $c$ is a $(3,1)$-coloring of~$G$.
This is a contradiction since $G'$ is an odd cycle.
\end{proof}

\begin{lemma}[Bernshteyn~\cite{B}] \label{lemma:Edge}
	If $G$ is a $4$-regular graph and there exists a non-double edge $uv$ in $G$ with $u \neq v$ such that $G-\{u,v\}$ is connected, then $G$ is $(3,1)$-colorable.
\end{lemma}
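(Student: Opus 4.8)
The plan is to construct a $(3,1)$-coloring explicitly using three colors, reserving the third color for the single edge $uv$ and reducing everything else to finding a subgraph of $H := G - \{u,v\}$ with prescribed degree parities. First I would color $uv$ green and declare every other edge incident to $u$ (including any loops or parallel edges at $u$) to be red; this already makes $u$ incident to three red edge-ends and one green one, so $u$ meets the $(3,1)$-condition, and loops at $u$ are harmless since they contribute two equal-colored ends and never reach $H$. I treat $v$ the same way, coloring all of its edges other than $uv$ a single color, \emph{either} red \emph{or} blue, with the choice fixed later. It then remains to color the edges lying inside $H$ red or blue so that every $w \in V(H)$ becomes $(3,1)$. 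Since green occurs only on $uv$, at such a $w$ this is equivalent to requiring that the total number of red edges at $w$ be odd, as then it is $1$ or $3$ and blue supplies the rest.

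Writing $a_w$ and $b_w$ for the numbers of edges from $w$ to $u$ and to $v$ (all already forced to a color), this condition says the red subgraph $R_H \subseteq E(H)$ we pick must satisfy a prescribed parity $\deg_{R_H}(w) \equiv \varepsilon(w) \pmod 2$ at every vertex $w$, where $\varepsilon(w)$ is determined by $a_w$, and also by $b_w$ in the scheme where $v$'s edges are red. Such a parity-constrained subgraph is exactly a $T$-join with $T = \{w : \varepsilon(w) = 1\}$, and in the \emph{connected} graph $H$ it exists precisely when $|T|$ is even: pair up the vertices of $T$ and take the symmetric difference of their connecting paths in a spanning tree of $H$. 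This is the one place the hypothesis that $G - \{u,v\}$ is connected is used.

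The crux is getting $|T|$ to come out even. One computes $\sum_w \varepsilon(w) \equiv |V(G)| \pmod 2$ in the ``$v$ red'' scheme and $\equiv |V(G)| + 1 \pmod 2$ in the ``$v$ blue'' scheme, so the two schemes have \emph{opposite} parities and at least one makes $|T|$ even. The reason they differ is that $d_u := \sum_w a_w$ and $d_v := \sum_w b_w$ are both odd, and this is forced precisely by $uv$ being a single edge with $u \neq v$: then $d_u = 3 - 2\ell_u$, where $\ell_u$ is the number of loops at $u$, is odd, and similarly for $v$. Thus the ``non-double'' and ``$u\neq v$'' hypotheses supply exactly the parity flexibility needed to absorb both parities of $|V(G)|$, while the connectivity of $H$ supplies the $T$-join; once $R_H$ is fixed, coloring its edges red and the remaining $H$-edges blue completes the $(3,1)$-coloring.

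I expect the main obstacle to be the parity and pseudograph bookkeeping rather than any single deep step: one must track how parallel edges alter $a_w, b_w$, confirm that a vertex of $H$ incident to no $H$-edge can arise only when $|V(H)| = 1$ (connectivity excludes it otherwise) and that the parity then matches automatically, and verify that the finished coloring genuinely uses at least two colors — which it does, since every $w \in V(H)$ ends up incident to both a red and a blue edge. Degenerately small cases (for instance, checking $|V(G)| = 2$ cannot occur, as $u$ would then need an odd number of degrees from loops) should be dispatched by direct inspection.
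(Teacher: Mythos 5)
Your argument is correct, but there is nothing in the paper to compare it against: the paper does not prove Lemma~\ref{lemma:Edge} at all, it imports it from Bernshteyn~\cite{B} and uses it as a black box (in Lemma~\ref{lemma:2conn} and in the proof of Theorem~\ref{THM:4-regular-no-3-1}). Judged on its own terms, your proof is sound and self-contained. Coloring $uv$ green and making the remaining edges at $u$ red and the remaining edges at $v$ monochromatic (red or blue, chosen later) correctly reduces the lemma to finding a subgraph of $H=G-\{u,v\}$ whose degrees have prescribed parities, i.e.\ a $T$-join, and in a connected graph such a subgraph exists exactly when $\lvert T\rvert$ is even (tree-path symmetric differences). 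The pivot of the proof is also right: the number of edges from $u$ into $H$ is $3-2\ell_u$ and from $v$ into $H$ is $3-2\ell_v$ (here $\ell_u,\ell_v$ count loops), both odd precisely because $uv$ is a single non-loop edge with $u\neq v$, so switching $v$'s color class between red and blue flips the parity of $\lvert T\rvert$, and one of the two choices makes it even; your parity bookkeeping ($\lvert T\rvert\equiv\lvert V(G)\rvert$ in the ``$v$ red'' scheme, $\lvert V(G)\rvert+1$ in the ``$v$ blue'' scheme) checks out. The degenerate cases are also handled correctly: a vertex of $H$ with no $H$-edges forces $\lvert V(H)\rvert=1$, where the $T$-join condition degenerates to $T=\emptyset$ and the ``$v$ blue'' scheme delivers it since $a_w=3-2\ell_u$ is odd; and $\lvert V(G)\rvert=2$ is impossible since $u$ would need half-integrally many loops. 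Two points worth making explicit in a final write-up: loops of $H$ contribute $2$ to a degree, hence never affect the parity constraints and may simply all be colored blue (the tree-path $T$-join uses none of them); and since each vertex of $H$ has total degree exactly $4$, an odd red count automatically lies in $\{1,3\}$, which is what turns the parity condition into the $(3,1)$-condition. The ``at least two colors'' requirement is immediate (green on $uv$, red at $u$), so your final remark about red and blue both appearing at each $w$ is true but not even needed.
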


\begin{lemma}[Bernshteyn~\cite{B}] \label{lemma:AddLoop}
    If $G$ is a $4$-regular graph and $G' = (G,e)+O$ for some edge $e \in E(G)$, then either $G$ or $G'$ has a $3$-regular subgraph.
\end{lemma}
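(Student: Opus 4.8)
The plan is to argue the contrapositive: assuming that $G$ has no $3$-regular subgraph, I would produce one in $G'$. The first step is a local analysis at the new vertex $x$. Since $G'-x=G-e$, any $3$-regular subgraph of $G'$ that avoids $x$ would already be a $3$-regular subgraph of $G$, contradicting our assumption; hence it suffices to find a $3$-regular subgraph of $G'$ containing $x$. At $x$ the available edges are $ux$, $vx$, and the loop, and the loop already contributes $2$ to the degree, so the only way for $x$ to have degree $3$ is to use the loop together with exactly one of $ux,vx$. By the symmetry of $u$ and $v$ this reduces the goal to the following: find a nonempty subgraph $H$ of $G-e$ in which one of $u,v$ has degree $2$ and every other vertex has degree $0$ or $3$. (Adjoining the loop and the corresponding pendant edge to such an $H$ yields the desired subgraph of $G'$.)

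For the construction I would use a $2$-factor decomposition. As $G'$ is $4$-regular, Petersen's theorem lets me write $G'=F_1\cup F_2$ as an edge-disjoint union of two $2$-factors, and I may choose the labeling so that the loop lies in $F_1$. Then $x$ is an isolated loop of $F_1$, while $ux,vx\in F_2$, so $F_1$ restricted to $V(G)$ is a $2$-factor of $G$ avoiding $e$ and $F_2$ (after contracting $x$) is a $2$-factor of $G$ through $e$. If every cycle of $F_2$ is even, then $F_2$ has a perfect matching $M$, and $F_1\cup M$ is a spanning $3$-regular subgraph of $G'$: every vertex gets degree $2$ from $F_1$ and $1$ from $M$, and in particular $x$ gets $2$ from its loop and $1$ from the single matching edge $ux$ or $vx$ that covers it. This settles the case in which $F_2$ has no odd cycle.

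The main obstacle is exactly the presence of odd cycles in $F_2$, and here the disjunction ``$G$ or $G'$'' becomes essential. A parity count shows this case cannot simply be avoided: the number of odd cycles of a $2$-factor on $N$ vertices is congruent to $N$ modulo $2$, so when $\abs{V(G)}$ is even the factor $F_2$ (on $\abs{V(G)}+1$ vertices) always has an odd number of odd cycles. Thus a \emph{spanning} cubic subgraph need not exist, and one must pass to non-spanning subgraphs, using precisely the relaxation at $u$ or $v$ (degree $2$ in place of $3$) that the loop supplies. I would resolve the odd cycles by alternating-path exchanges between $F_1$ and $F_2$ along the connected graph $G$: such a swap either pairs up two odd cycles into even cycles, strictly decreasing the number of odd cycles so that the process can be iterated, or, when no productive swap exists, exposes a $3$-regular subgraph inside $G$ itself. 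The loop at $x$ contributes the extra odd component of $F_1$ that makes this parity bookkeeping close. I expect this exchange-and-extraction step to be the technical heart of the argument, paralleling the Berge--Sauer and Tashkinov machinery for finding $3$-regular subgraphs.
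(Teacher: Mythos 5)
The paper does not actually prove this lemma---it is imported from Bernshteyn~\cite{B}---so your proposal has to stand on its own. Its first two paragraphs do: the reduction at $x$ (a cubic subgraph of $G'$ avoiding $x$ lies in $G-e\subseteq G$, and one through $x$ must use the loop plus exactly one of $ux,vx$, hence it suffices to find $H\subseteq G-e$ with one of $u,v$ of degree $2$ and all other vertices of degree $0$ or $3$), the Petersen decomposition $G'=F_1\cup F_2$ with the loop in $F_1$ (valid for pseudographs, and correctly forcing $ux,vx\in F_2$), and the observation that if every cycle of $F_2$ is even then $F_1\cup M$ is a spanning cubic subgraph of $G'$ for any perfect matching $M$ of $F_2$---all of this is correct.

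The genuine gap is your third paragraph, which is where the entire content of the lemma sits. Your own parity count shows the bad case is unavoidable: when $\lvert V(G)\rvert$ is even, $F_2$ has an odd number of odd cycles, so the easy case never applies. For that case you offer only an unspecified ``alternating-path exchange'' together with an unproved dichotomy: that each swap either merges two odd cycles into even ones or ``exposes a $3$-regular subgraph inside $G$ itself.'' Nothing is defined or verified---which edges get exchanged, why the resulting edge sets are again $2$-factors with the loop still in $F_1$, why the number of odd cycles strictly decreases, and above all why the absence of a productive swap forces a cubic subgraph of $G$. That last implication is precisely the Berge--Sauer/Tashkinov-level difficulty, and deferring it means the disjunction ``$G$ or $G'$,'' which you correctly identify as essential, is never actually established. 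Put concretely, after your reduction what must be produced is a nonempty union $S$ of cycles of $F_1-x$ such that $F_2$ restricted to $S$ has a matching covering all of $S$, or all of $S$ except exactly one of $u,v$---or else a cubic subgraph of $G$ outright; your sketch contains no argument that one of these alternatives always occurs. Until that dichotomy is proved, this is a plausible plan, not a proof. (A further minor point: your exchange step works ``along the connected graph $G$,'' but the lemma does not assume $G$ is connected, so any such argument must be made componentwise, or restricted to the component of $G'$ containing $x$.)
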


\begin{lemma} \label{lemma:JoinAtLoop}
    Let $G_1$ and $G_2$ be $(3,1)$-colorable $4$-regular graphs and let $G_2$ have a loop $vv$.
    Construct $G$ by subdividing an edge~$uw$ in $G_1$, identifying the new vertex with $v$, and removing the loop~$vv$, so
    \begin{eqnarray*}
        V(G) & = & V(G_1) \dotcup V(G_2); \\
        E(G) & = & (E(G_1)\setminus \{uw\}) \dotcup (E(G_2)\setminus\{vv\}) \dotcup \{uv,wv\}.
    \end{eqnarray*}
   (See Figure~\ref{fig:JoinAtLoop}.) Then $G$ is $(3,1)$-colorable.
\end{lemma}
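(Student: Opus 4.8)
The plan is to combine the given $(3,1)$-colorings $c_1$ of $G_1$ and $c_2$ of $G_2$ into a single $(3,1)$-coloring of $G$, exploiting the fact that the $(3,1)$-condition is purely local (three incident edges of one color and one of a different color at each vertex) and is therefore invariant under any global permutation of the color set within a component.

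First I would analyze the structure that $c_2$ is forced to have at the looped vertex $v$. Since $G_2$ is $4$-regular and the loop $vv$ contributes $2$ to the degree of $v$, vertex $v$ has exactly two further incident non-loop edges, say $e_1 = vx$ and $e_2 = vy$; in particular $v$ cannot carry a second loop, since two loops would give $v$ two pairs of equally colored incidences, hence a $2$--$2$ split (or a monochromatic $4$) rather than a $3$--$1$ split, contradicting the existence of $c_2$. Now the loop $vv$ accounts for two incidences at $v$, both of a single color $a$. For $c_2$ to realize a $3$--$1$ split at $v$, the majority color must be $a$ (any other majority color would force at least five incidences at a degree-$4$ vertex), so exactly one of $e_1, e_2$ has color $a$ and the other, say $e_2$, has some color $b \neq a$.

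Next I would align $G_1$'s coloring with this color $a$. Let $q$ be the $c_1$-color of the edge $uw$. Applying the transposition that swaps $q$ and $a$ throughout $c_1$ yields another $(3,1)$-coloring of $G_1$ in which $uw$ has color $a$, since permuting colors preserves the $3$--$1$ split at every vertex. I then define a coloring $c$ of $G$ by using this recolored $c_1$ on $E(G_1) \setminus \{uw\}$, using $c_2$ on $E(G_2) \setminus \{vv\}$, and assigning color $a$ to both new edges $uv$ and $wv$.

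Finally I would verify the $3$--$1$ condition vertex by vertex. Every vertex of $G_2$ other than $v$, and every vertex of $G_1$ other than $u$ and $w$, retains exactly its incident color multiset from $c_2$ or from the recolored $c_1$, so the split there is untouched. At $u$ (and symmetrically at $w$), the edge $uw$ of color $a$ has merely been replaced by the edge $uv$ (respectively $wv$) of the same color $a$, so the local color multiset is unchanged. At $v$, the surviving edges $e_1, e_2$ keep their colors $a$ and $b$, and together with $uv = wv = a$ this produces three incidences of $a$ and one of $b \neq a$, a valid $3$--$1$ split. Hence $c$ is a $(3,1)$-coloring of $G$. The only genuinely delicate point is the forced structure of $c_2$ at $v$, namely the loop-counting argument that identifies the loop's color as the local majority and pins down that exactly one neighboring edge matches it; once that is in hand, the recoloring of $G_1$ and the gluing are routine.
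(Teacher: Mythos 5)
Your proof is correct and takes essentially the same approach as the paper: the paper likewise observes that the two non-loop edges at $v$ must receive different colors under $c_2$, assumes ``without loss of generality'' that $c_1(uw)$ equals the color of one of them (which is precisely the color permutation you make explicit via a transposition), and then colors both new edges $uv$ and $wv$ with that color. Your detailed loop-counting at $v$ and the explicit recoloring of $c_1$ are just fuller renderings of steps the paper compresses into one sentence each.
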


\begin{figure}[ht]
    \centering
    \begin{tikzpicture}
    	\foreach \x in {-1.2,.9}{
    		\draw (1.55*\x,0) circle(11mm);
    		\foreach \y in {-.4,.4}{ \filldraw (\x,\y) circle(1mm); };
    	};
        \draw (.9,.4) node[right]{$u$} ..controls (.7,0) .. (.9,-.4) node[right]{$w$};
        \draw (-2,0) node{$G_1$};
        \draw (1.7,0) node{$G_2$};
        \foreach \y in {-.4,.4}{ \draw (-1.2,\y) -- (-.4,0); };
        \draw (-.2,0) circle(2mm);
        \draw (-.46,0) node[above]{$v$};
        \filldraw (-.4,0) circle(1mm);
        
        \draw (3,0) node{$\longrightarrow$};
        
        \begin{scope}[xshift=6cm]
	    	\foreach \x in {-.9,.9}{
	    		\draw (1.6*\x,0) circle(11mm);
	    		\foreach \y in {-.4,.4}{ 
	    			\filldraw (\x,\y) circle(1mm);
	    			\draw (\x,\y) -- (0,0);
			 };
	    	};
	    	\draw (0,-.9) node{$G$}; 
	    	\draw (.9,.4) node[right] {$u$};
	    	\draw (.9,-.4) node[right] {$w$};
	    	\filldraw (0,0) circle(1mm) node[above]{$v$};
        \end{scope}
    \end{tikzpicture}
    \caption{Joining $G_2$ to $G_1$ at a loop, as in Lemma~\ref{lemma:JoinAtLoop}.} \label{fig:JoinAtLoop}
\end{figure}
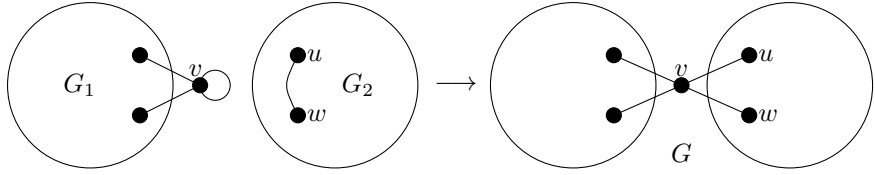

\begin{proof}
    Fix $(3,1)$-colorings $c_i$ of $G_i$ for $i \in \{1,2\}$.
    Note that $v$ in $G_2$ is incident to only~one loop and that the two non-loop edges incident to $v$ have different colors under $c_2$.
    Without loss of generality, assume that $c_1(uw)$ is equal to the color of one of the non-loop edges incident to $v$.
    Therefore the colorings $c_1$ and~$c_2$ extend to a $(3,1)$-coloring of $G$ by coloring the edges $uv$ and~$uw$ with color~$c_1(uw)$.    
\end{proof}

\begin{corollary}[to Lemmas~\ref{lemma:AddLoop},~\ref{lemma:JoinAtLoop}] \label{cor:Adhes}
	Suppose exactly~one of the connected $4$-regular graphs $G_1$ and~$G_2$ is $(3,1)$-colorable.
	Then for any $e_1 \in E(G_1)$ and $e_2 \in E(G_2)$, $(G_1,e_1)+(G_2,e_2)$ is $(3,1)$-colorable.
\end{corollary}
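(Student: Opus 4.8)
The plan is to reduce the corollary to Lemma~\ref{lemma:JoinAtLoop} by manufacturing a loop out of the half that fails to be $(3,1)$-colorable. Since edge adhesion is symmetric in its two inputs, assume without loss of generality that $G_1$ is $(3,1)$-colorable while $G_2$ is not. The first step is to recognize the edge adhesion $(G_1,e_1)+(G_2,e_2)$ as an instance of the loop-join in Lemma~\ref{lemma:JoinAtLoop}. Let $G_2' = (G_2,e_2)+O$ be the loop-adhesion obtained by subdividing $e_2 = u_2v_2$ at a new vertex $x$ and attaching a loop $xx$. Applying the construction of Lemma~\ref{lemma:JoinAtLoop} to $G_1$ and $G_2'$ — subdividing $e_1 = u_1v_1$, identifying its new vertex with $x$, and deleting the loop $xx$ — leaves $x$ joined to exactly $u_1, v_1, u_2, v_2$. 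Comparing vertex and edge sets shows this is precisely the edge adhesion $(G_1,e_1)+(G_2,e_2)$ with $x$ playing the role of the adhesion vertex $w$.

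Granting this identification, it remains only to verify the hypotheses of Lemma~\ref{lemma:JoinAtLoop}: that $G_2'$ carries a loop (true by construction), that $G_1$ is $(3,1)$-colorable (true by assumption), and that $G_2'$ is $(3,1)$-colorable. The last point is the crux, and I would establish it through the ``local versus global'' bridge of Theorem~\ref{3-regular-ordered-coloring}. Because $G_2$ is connected and not $(3,1)$-colorable, it can contain no $3$-regular subgraph: otherwise Theorem~\ref{3-regular-ordered-coloring} would supply an ordered $(3,1)$-coloring, hence a $(3,1)$-coloring, contradicting our hypothesis. Now apply Lemma~\ref{lemma:AddLoop} to $G_2$ at $e_2$: either $G_2$ or $G_2'$ has a $3$-regular subgraph. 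Since $G_2$ does not, $G_2'$ must. As $G_2'$ is itself connected and $4$-regular, Theorem~\ref{3-regular-ordered-coloring} then yields an ordered $(3,1)$-coloring of $G_2'$, and in particular a $(3,1)$-coloring.

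With $G_1$ and $G_2'$ both $(3,1)$-colorable and $G_2'$ containing a loop, Lemma~\ref{lemma:JoinAtLoop} produces a $(3,1)$-coloring of its output, which is $(G_1,e_1)+(G_2,e_2)$, completing the argument.

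The step I expect to be the main obstacle is the very first one: seeing that the edge adhesion is secretly a loop-join, so that the colorable graph $G_1$ can be combined not with the stubborn $G_2$ but with its ``upgraded'' cousin $G_2'$. The move of using Lemma~\ref{lemma:AddLoop} to trade a missing $3$-regular subgraph in $G_2$ for a guaranteed one in $G_2'$ — thereby converting a non-colorable half into a colorable graph with a loop that Lemma~\ref{lemma:JoinAtLoop} can accept — is the genuinely nonobvious idea. Once it is in place, the remaining work (matching vertex and edge sets for the identification, and checking connectivity so Theorem~\ref{3-regular-ordered-coloring} applies to both $G_2$ and $G_2'$) is routine bookkeeping.
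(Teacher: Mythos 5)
Your proof is correct and takes essentially the same route as the paper's: both pass to $G_2'=(G_2,e_2)+O$, combine Theorem~\ref{3-regular-ordered-coloring} with Lemma~\ref{lemma:AddLoop} to conclude that $G_2'$ is $(3,1)$-colorable, and then apply Lemma~\ref{lemma:JoinAtLoop} to $G_1$ and $G_2'$, observing that the resulting graph is exactly $(G_1,e_1)+(G_2,e_2)$. The paper's version is merely terser, leaving implicit the bookkeeping you spell out (the double use of Theorem~\ref{3-regular-ordered-coloring} and the identification of the loop-join with the edge adhesion).
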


\proof
Without loss of generality, we assume that $G_1$ is $(3,1)$-colorable and $G_2$ is not.
Let $e_1 \in E(G_1)$ and $e_2 \in E(G_2)$.  
By Theorem~\ref{3-regular-ordered-coloring} and Lemma~\ref{lemma:AddLoop}, the graph $G'_2=(G_2,e_2)+O$ is $(3,1)$-colorable. 
Applying Lemma~\ref{lemma:JoinAtLoop} to $G_1$ and $G'_2$, we see that $(G_1,e_1)+(G_2,e_2)$ is $(3,1)$-colorable.
\qed

\begin{lemma} \label{lemma:2conn}
	Let $G$ be a $4$-regular graph that is not $(3,1)$-colorable.
	If $G$ has a non-double, non-loop edge, then $G$ is not $2$-connected.
\end{lemma}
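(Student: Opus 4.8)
I argue by contradiction: suppose that, besides being $4$-regular, not $(3,1)$-colorable, and having a non-double, non-loop edge $uv$, the graph $G$ is also $2$-connected; the goal is to exhibit a cut vertex. The starting point is Lemma~\ref{lemma:Edge}: since $G$ is not $(3,1)$-colorable and $uv$ is a non-double, non-loop edge with $u\neq v$, the graph $G-\{u,v\}$ must be \emph{disconnected}, with components $C_1,\dots,C_k$, $k\geq 2$. Because $G$ is $2$-connected, neither $u$ nor $v$ is a cut vertex, so $G-u$ and $G-v$ are connected; hence every $C_i$ is joined to both $u$ and $v$. This also forbids loops at $u$ and $v$ (a looped vertex would have only one edge-slot left for the components, and so could reach at most one of them), so $u$ and $v$ each send exactly $3$ edges into $C_1\cup\dots\cup C_k$.

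Next I would locate a ``thin'' component. A parity count shows that the number of edges between a fixed $C_i$ and $\{u,v\}$ has the same parity as $4\abs{C_i}$ and is therefore even, and it is at least $2$ since $C_i$ meets both $u$ and $v$. As these counts sum to $3+3=6$ over the $k\geq 2$ components, they cannot all be at least $4$, so some component---call it $C_1$---meets $\{u,v\}$ in exactly two edges: one edge $uc_1$ and one edge $vc_1'$, with $c_1,c_1'\in C_1$ possibly equal. In particular $uc_1$ is itself a non-double, non-loop edge, and $\{uc_1,vc_1'\}$ is a $2$-edge-cut isolating $C_1$.

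The key step is then to apply Lemma~\ref{lemma:Edge} a \emph{second} time, now to the edge $uc_1$: since $G$ is not $(3,1)$-colorable, $G-\{u,c_1\}$ is also disconnected. But the remainder $\{v\}\cup C_2\cup\dots\cup C_k$ is connected through $v$, and the only edge that can attach $C_1-c_1$ to it is $vc_1'$. From this I read off a cut vertex. If $c_1'=c_1$ and $\abs{C_1}\geq 2$, then $C_1-c_1$ has no edge to the rest of the graph, so $c_1$ is a cut vertex. If $c_1'\neq c_1$, then the disconnectedness of $G-\{u,c_1\}$ forces $C_1-c_1$ to split off a nonempty piece $D$ avoiding $c_1'$, all of whose edges run to $c_1$; again $c_1$ is a cut vertex. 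Either way $G$ is not $2$-connected, the desired contradiction.

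The main obstacle is the handful of degenerate configurations that the second application of Lemma~\ref{lemma:Edge} must absorb. The most delicate is when $C_1$ is a single vertex carrying a loop together with the edges $uc_1$ and $vc_1'=vc_1$: here deleting $u$ and $c_1$ removes all of $C_1$ and leaves the connected graph $\{v\}\cup C_2\cup\dots\cup C_k$, so $G-\{u,c_1\}$ is in fact connected and Lemma~\ref{lemma:Edge} makes $G$ $(3,1)$-colorable outright---again contradicting our hypotheses, so this configuration simply cannot arise. I expect the remaining bookkeeping to confirm that in every surviving case the edge $uc_1$ behaves as claimed and pins down a cut vertex; the idea that unlocks the whole argument is precisely to re-invoke the edge lemma on the \emph{newly discovered} thin edge $uc_1$ rather than only on the given edge $uv$.
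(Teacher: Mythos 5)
Your proof is correct, but it takes a genuinely different route from the paper's. Both arguments begin identically: apply Lemma~\ref{lemma:Edge} to $uv$ to conclude that $G-\{u,v\}$ is disconnected, use $2$-connectedness to see that every component sends at least one edge to each of $u$ and $v$, and run the parity count to isolate a component $C_1$ joined to $\{u,v\}$ by exactly two edges. At that point the paper goes chromatic: it packages $C_1$ and the rest of $G$ into two auxiliary $4$-regular graphs (each obtained by adding a subdivided edge carrying a loop), invokes Lemma~\ref{lemma:Edge} on each auxiliary graph to obtain $(3,1)$-colorings, and pastes the two colorings together into a $(3,1)$-coloring of $G$, contradicting non-colorability. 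You instead stay structural: you invoke Lemma~\ref{lemma:Edge} a second time inside $G$ itself, on the newly found thin edge $uc_1$, and read off a cut vertex, contradicting $2$-connectedness directly. Your route is more elementary---no auxiliary constructions, no loop gadgets, no color-pasting---and it treats explicitly the degenerate case where $C_1$ is a single looped vertex, in which $G-\{u,c_1\}$ is connected and Lemma~\ref{lemma:Edge} yields colorability outright. Notably, that is precisely the case the paper's proof glosses over: its assertion that ``$u_1$ is not a cut-vertex of $G$, so $u_1\neq v_1$'' fails when $G_1$ is a single vertex with a loop (the conclusion still holds there, but by a direct check rather than by the stated implication), so your case analysis is in this respect tighter. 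What the paper's version buys in exchange is coherence with the rest of the section: the subdivided-edge-plus-loop construction and the coloring-pasting step are the same machinery used in Lemma~\ref{lemma:JoinAtLoop}, Corollary~\ref{cor:Adhes}, and the proof of Theorem~\ref{THM:4-regular-no-3-1}, so its argument slots directly into the edge-adhesion framework on which the characterization is built.
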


\begin{proof}
    Let $uv$ be a non-double, non-loop edge, and suppose for contradiction that $G$ is $2$-connected.  By Lemma~\ref{lemma:Edge}, since $G$ is not $(3,1)$-colorable, $G' = G - \{u,v\}$ is disconnected.
    Since $G$ is $2$-connected, neither $u$ nor $v$ is a cut-vertex.
    Therefore, every component of $G'$ must contain at least one vertex from $N_G(u)$ and at least one vertex from $N_G(v)$.
    Since the sum of the degrees of the vertices must be even in each component, the $4$-regularity of $G$ implies that each component of $G'$ must have an even number of vertices from $N_G(u) \cup N_G(v)$.
    Let $N_G(u) \setminus \{v\} = \{u_1,u_2,u_3\}$ and $N_G(v) \setminus \{u\} = \{v_1,v_2,v_3\}$.
    Without loss of generality, $G'$ is the disjoint union of a component~$G_1$ containing $u_1$ and $v_1$ and a subgraph~$G_2$ (of one or two components) containing $u_2$, $u_3$, $v_2$, and $v_3$.

	Let $G'_1 = (G_1 + u_1v_1, u_1v_1) + O$ and $G'_2 = ((G-G_1) + uv, uv) + O$. 
	(See Figure~\ref{fig:2Conn}.)
	That is,
	\begin{eqnarray*}
		V(G'_1) & = & V(G_1) \dotcup \{w_1\};\\
		E(G'_1) & = & E(G_1) \dotcup \{u_1w_1,v_1w_1,w_1w_1 \};\\
		V(G'_2) & = & V(G_2) \dotcup \{u,v,w_2\};\\
		E(G'_2) & = & E(G_2) \dotcup \{uu_2,uu_3,uv,vv_2,vv_3,uw_2,vw_2,w_2w_2 \}.
	\end{eqnarray*}
	
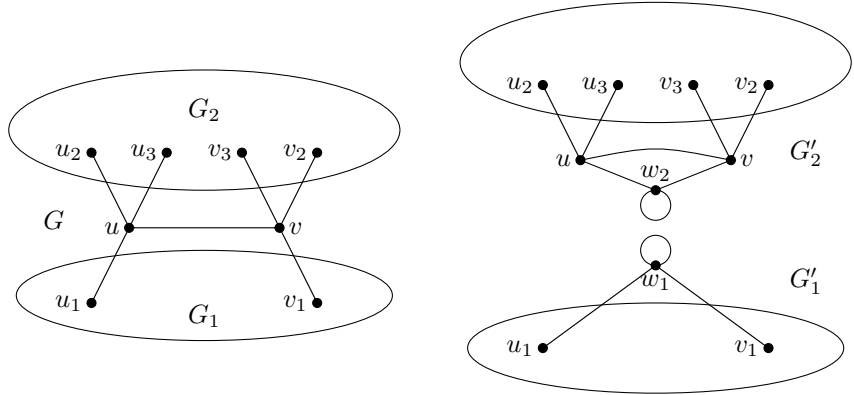
\begin{figure}[ht]
	\centering

	\begin{tikzpicture}
		\draw (-1,.1) node {$G$};
		\filldraw (0,0) circle(.06) node[left]{$u$} -- (2,0) circle(.06) node[right]{$v$};
		\filldraw (-.5,-1) circle(.06) node[left]{$u_1$} -- (0,0);
		\filldraw (2.5,-1) circle(.06) node[left]{$v_1$} -- (2,0);
		\draw (1,-.9) ellipse(25mm and 6mm) node[below]{$G_1$};
		\filldraw (-.5,1) circle(.06) node[left]{$u_2$} -- (0,0);
		\filldraw (2.5,1) circle(.06) node[left]{$v_2$} -- (2,0);
		\filldraw (.5,1) circle(.06) node[left]{$u_3$} -- (0,0);
		\filldraw (1.5,1) circle(.06) node[left]{$v_3$} -- (2,0);
		\draw (1,1.3) ellipse(26mm and 8mm) node[above]{$G_2$};

		\begin{scope}[xshift=6cm, yshift=.9cm]
			\draw (0,0) .. controls (1,.2) .. (2,0) -- (1,-.4) -- (0,0);
			\filldraw (1,-.4) circle(.06) node[above]{$w_2$};
			\draw (1,-.6) circle(2mm);
			\filldraw (-.5,1) circle(.06) node[left]{$u_2$} -- (0,0) circle(.06) node[left]{$u$};
			\filldraw (2.5,1) circle(.06) node[left]{$v_2$} -- (2,0) circle(.06) node[right]{$v$};
			\filldraw (.5,1) circle(.06) node[left]{$u_3$} -- (0,0);
			\filldraw (1.5,1) circle(.06) node[left]{$v_3$} -- (2,0);
			\draw (1,1.3) ellipse(26mm and 8mm);
			\draw (3,0.1) node {$G'_2$};
		\end{scope}

		\begin{scope}[xshift=6cm, yshift=-.6cm]
			\filldraw (-.5,-1) circle(.06) node[left]{$u_1$} -- (1,.1) circle(.06);
			\filldraw (2.5,-1) circle(.06) node[left]{$v_1$} -- (1,.1) node[below]{$w_1$};
			\draw (1,.3) circle (2mm);
			\draw (1,-1) ellipse(25mm and 6mm);
			\draw (3,-.1) node{$G'_1$};
		\end{scope}
	\end{tikzpicture}

	\caption{Splitting a $2$-connected graph into two $(3,1)$-colorable graphs, from the proof of Lemma~\ref{lemma:2conn}.} \label{fig:2Conn}
\end{figure}

	By the assumption of $2$-connectedness, the vertex $u_1$ is not a cut-vertex of $G$, so $u_1 \neq v_1$ and $G'_1 - \{u_1,w_1\}$ is connected.
	Thus by Lemma~\ref{lemma:Edge}, $G'_1$ is $(3,1)$-colorable.
	Likewise, $G'_2 - \{u,w_2\}$ is connected, so $G'_2$ is $(3,1)$-colorable.
	Select $(3,1)$-coloring $c_i$ of $G'_i$ for $i \in \{1,2\}$.
	Note that because of the loops, $c_1(u_1w_1) \neq c_1(v_1w_1)$ and $c_2(uw_2) \neq c_2(vw_2)$.
	We can assume that $c_1(u_1w_1) = c_2(uw_2)$ and $c_1(v_1w_1) = c_2(vw_2)$.
	Therefore, the colorings $c_1$ and $c_2$ easily extend to a $(3,1)$-coloring $c$ of~$G$, 
	which is a contradiction.
\end{proof}

\begin{lemma}\label{lemma:split}
	Let $G$ be a connected $4$-regular graph that is not $2$-connected. Then $G = (G_1, e_1) + (G_2, e_2)$ for some $4$-regular graphs $G_1$, $G_2$ and edges $e_1 \in E(G_1)$, $e_2 \in E(G_2)$.
\end{lemma}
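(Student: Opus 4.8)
The plan is to locate a cut vertex of $G$ and to show that the four edges meeting it split as $2+2$ across the two sides it separates, so that $G$ may be ``unzipped'' at that vertex into an edge adhesion. Since $G$ is connected but not $2$-connected, it has a cut vertex $w$ (I read ``not $2$-connected'' as the existence of a cut vertex, so that the one-vertex graph with two loops, which has none, is not at issue). Let $D_1,\dots,D_k$ with $k\ge 2$ be the components of $G-w$. First I would record a parity fact: for each $i$, summing $\deg_G$ over $V(D_i)$ gives $4\abs{V(D_i)}$, and the same sum equals $2e_i+d_i$, where $e_i$ counts the edges of $G$ lying inside $D_i$ (loops included, each contributing $2$) and $d_i$ counts the edges from $D_i$ to $w$. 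Hence $d_i=4\abs{V(D_i)}-2e_i$ is even; since $D_i$ is a component of $G-w$ in a connected graph, $d_i\ge 1$, so in fact $d_i\ge 2$.

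Next I would bring in $\deg_G(w)=4$. If $w$ carries $\ell$ loops, its four edge-ends satisfy $2\ell+\sum_{i=1}^k d_i=4$. As $k\ge 2$ and every $d_i\ge 2$, the only solution is $k=2$, $d_1=d_2=2$, and $\ell=0$. Thus $G-w$ has exactly two components $D_1,D_2$, the vertex $w$ carries no loop, and $w$ is joined to $D_1$ by two edges, to vertices $a_1,b_1$ (possibly $a_1=b_1$), and to $D_2$ by two edges, to vertices $a_2,b_2$ (possibly $a_2=b_2$).

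With this structure I would reconstruct the two factors by undoing the subdivision at $w$. Set $G_1=D_1+a_1b_1$ and $G_2=D_2+a_2b_2$, where adding $a_ib_i$ inserts a single edge (a loop when $a_i=b_i$). Every vertex of $D_i$ other than $a_i,b_i$ retains all four of its $G$-edges inside $D_i$, and $a_i,b_i$ lose exactly their edges to $w$ but regain the same number through $a_ib_i$, so $G_1$ and $G_2$ are $4$-regular; they are vertex-disjoint since $V(D_1)\cap V(D_2)=\varnothing$ and neither contains $w$. Finally I would check straight from the definition of edge adhesion that $(G_1,a_1b_1)+(G_2,a_2b_2)=G$: subdividing $a_1b_1$ and $a_2b_2$ and identifying the two new vertices recreates $w$ together with its four edges $a_1w,b_1w,a_2w,b_2w$, while the remaining edges are precisely those inside $D_1$ and $D_2$. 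Matching this against $V(G)=V(D_1)\dotcup V(D_2)\dotcup\{w\}$ and against $E(G)$ completes the identification.

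The verifications that $G_1,G_2$ are $4$-regular and that the adhesion returns $G$ are mechanical. The genuinely substantive step is the degree-parity count forcing the $2$--$2$ split: it is exactly what rules out a cut vertex whose deletion leaves three or more pieces, or one bearing a loop, and so guarantees that the two edges on each side can be fused back into the single edges $e_1,e_2$ demanded by the definition. I would take care with the degenerate case $a_i=b_i$, where $e_i$ is a loop and its subdivision yields a double edge to $w$; this is consistent with the adhesion formula (reading $u_iw,v_iw$ as two edges from $a_i$ to $w$) but deserves an explicit word to avoid a gap.
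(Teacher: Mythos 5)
Your proof is correct and follows the same route as the paper: pick a cut-vertex $w$, use the parity of edge counts to force exactly two components each receiving exactly two edges of $w$ (and no loop at $w$), and then fuse each pair back into a single edge. The paper's own proof is just a terser version of this same observation, so your write-up differs only in spelling out the degree count, the reconstruction of $G_1,G_2$, and the degenerate case $a_i=b_i$ explicitly.
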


\begin{proof}
	Indeed, let $w \in V(G)$ be a cut-vertex. Now the lemma is implied by the following observation. Since the number of vertices with odd degrees in a graph is always even, $G-w$ consists of exactly two components and each of these components receives exactly two of the edges incident to $w$.
\end{proof}

\proof[Proof of Theorem~\ref{THM:4-regular-no-3-1}]
Consider $4$-regular graphs $G_1$ and $G_2$ and edges $e_1$ in $G_1$, $e_2$ in $G_2$.
Any $(3,1)$-coloring of $(G_1,e_1)+(G_2,e_2)$ yields a $(3,1)$-coloring of $G_1$ or $G_2$, since the edges obtained by subdividing $e_1$ or $e_2$ are of the same color.
Therefore every graph that is obtained from odd double cycles via edge adhesion is not $(3,1)$-colorable due to Lemma~\ref{lemma:Cycles}.

Now let $G$ be a connected $4$-regular graph that is not $(3,1)$-colorable. 
We use induction on $|V(G)|$ to prove that $G$ is constructed from odd double cycles via edge adhesion. 
If $|V(G)|=1$, then $G$ is a double cycle of one vertex and the theorem trivially holds. 
Assume that $|V(G)|\ge 2$.
We may also assume that $G$ contains a non-double edge.  
Otherwise, if every edge is double, then $G$ is a double cycle, and by Lemma~\ref{lemma:Cycles}, $G$  is an odd double cycle,
and thus we are done.  

If each non-double edge is a loop, then one can easily check that $G$ is not $2$-connected.
If $G$ has a non-double non-loop edge, Lemma~\ref{lemma:2conn} implies that it is not $2$-connected. 
By Lemma~\ref{lemma:split}, $G = (G_1, e_1) + (G_2, e_2)$ for some $4$-regular graphs $G_1$, $G_2$ and edges $e_1 \in E(G_1)$, $e_2 \in E(G_2)$. 
Corollary~\ref{cor:Adhes} implies that either both $G_1$ and $G_2$ are $(3,1)$-colorable or neither of them is $(3,1)$-colorable. 
In the latter case, by the inductive hypothesis, we are done.
 
Assume that both $G_1$ and $G_2$ are $(3,1)$-colorable. 
Let $G'_1 = (G_1,e_1) + O$ and observe that
$G$ is obtained from $G'_1$ and $G_2$ as in the statement of Lemma~\ref{lemma:JoinAtLoop}. 
Since $G_2$ is $(3,1)$-colorable, but $G$ is not, Lemma~\ref{lemma:JoinAtLoop} implies that $G'_1$ is not $(3,1)$-colorable. 
Therefore, by the inductive hypothesis, $G'_1$ is obtained from odd double cycles via edge adhesion. 
Since $G'_1$ contains a loop and at least two vertices, it is not a double cycle. 
Thus, $G'_1 = (G'_{11}, e'_{11}) + (G'_{12}, e'_{12})$, where both $G'_{11}$ and $G'_{12}$ are not $(3,1)$-colorable. 
 Note that, without loss of generality, $G'_{11}$ does not contain the subdivided edge $e_1$, and so $G = (G'_{11}, e'_{11}) + (H, f)$ for some graph $H$ and edge $f$ in $H$.
 Since both $G$ and $G'_{11}$ are not $(3,1)$-colorable, neither is $H$ by Corollary~\ref{cor:Adhes}. 
 We have shown that $G$ is obtained from two graphs that are not $(3,1)$-colorable via edge adhesion, and so the inductive step is complete.
\qed
 
\section{\texorpdfstring{$(4,1)$-colorings and $\{4,1\}$-factors in $5$}{(4,1)-colorings and \{4,1\}-factors in 5}-regular graphs} \label{5reg}

In this section, we make progress toward settling Conjecture~\ref{CONJ:4-1-factor} and Question~\ref{QUE:4-1-coloring}.  
In particular, we show that if $G$ is a vertex-minimal counterexample to Conjecture~\ref{CONJ:4-1-factor}, then $G$ must satisfy a large number of structural conditions.  We show that similar conditions must hold for any vertex-minimal graph that gives a negative answer to Question~\ref{QUE:4-1-coloring}.

A set $S$ of edges of a connected graph $G$ is called an \emph{edge cut} if $G-S$ is disconnected.
An edge cut $S$ is \emph{minimal} provided $G-(S\setminus\{e\})$ is connected for each edge $e\in S$.
An edge cut of size $1$ is called a \emph{bridge}.
Note that a minimal edge cut does not contain loops.

Most of the following results are obtained using reductions to smaller graphs.
We also use a corollary of Tutte's $1$-Factor Theorem.

\begin{theo}[Tutte~\cite{tutte-1-factor}]
	A graph $G$ has a $1$-factor if and only if the number of connected components of $G-S$ of odd order is at most~$|S|$ for every vertex set $S \subseteq V(G)$.
\end{theo}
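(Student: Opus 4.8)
The plan is to prove both directions of the equivalence, with the forward implication routine and the reverse implication carrying the weight of the argument. Since loops can never occur in a $1$-factor and parallel edges affect neither the Tutte condition nor the existence of a $1$-factor, we may assume throughout that $G$ is simple. For necessity, suppose $G$ has a $1$-factor $M$ and fix $S \subseteq V(G)$. Every odd component $C$ of $G-S$ has an odd number of vertices, so the edges of $M$ lying inside $C$ cannot saturate all of $C$; hence at least one vertex of $C$ is matched by $M$ to a vertex of $S$. Distinct components send their matching edges to distinct vertices of $S$, so the number of odd components of $G-S$ is at most $\abs{S}$. (Taking $S=\emptyset$ also records that $\abs{V(G)}$ is even.)

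For sufficiency I would use an edge-maximality argument. First I would observe that the Tutte condition is preserved under adding edges: inserting an edge either leaves the components of $G-S$ unchanged or merges two of them, and in no case does the number of odd components increase. Consequently, if $G$ satisfies the condition but has no $1$-factor, then among all supergraphs of $G$ on $V(G)$ that satisfy the condition and have no $1$-factor there is an edge-maximal one, which I call $G^*$; by the preservation property, adding any missing edge to $G^*$ creates a $1$-factor, and since the complete graph on an even number of vertices has a $1$-factor, $G^* \neq K_n$.

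The heart of the proof is to understand $G^*$. Let $U$ be the set of vertices of $G^*$ adjacent to all others. The key claim is that every component of $G^*-U$ is a complete graph. To prove it, suppose some component $H$ is not complete; being connected, $H$ then contains vertices $x,z$ with a common neighbour $y$ such that $xy,yz \in E(G^*)$ but $xz \notin E(G^*)$, and since $y \notin U$ there is also a vertex $w$ with $yw \notin E(G^*)$. By maximality, $G^*+xz$ has a $1$-factor $M_1$ using $xz$ and $G^*+yw$ has a $1$-factor $M_2$ using $yw$. I would then examine $M_1 \triangle M_2$, a disjoint union of alternating cycles containing both the non-edge $xz$ and the non-edge $yw$. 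If these two edges lie on different cycles, choosing the $M_1$-edges on the cycle through $yw$, the $M_2$-edges on the cycle through $xz$, and the common edges elsewhere yields a $1$-factor of $G^*$ avoiding both $xz$ and $yw$. If they lie on a common cycle, I would use the symmetry of $x$ and $z$ as neighbours of $y$ to reroute the cycle through the genuine edge $yz$, again producing a $1$-factor of $G^*$. Either outcome contradicts the choice of $G^*$, proving the claim.

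Finally, with every component of $G^*-U$ complete, I would set $S=U$ and build a $1$-factor directly: match all but one vertex inside each odd complete component, send the leftover vertex of each odd component to a distinct vertex of $U$ (possible because the Tutte condition gives at most $\abs{U}$ odd components and $U$-vertices are universal), pair up the vertices of each even component, and match the remaining vertices of $U$ among themselves, which is possible because $U$ is a clique and a parity count (using that $\abs{V(G)}$ is even) shows an even number remain. This $1$-factor of $G^*$ contradicts the assumption, so $G$ itself must have a $1$-factor. I expect the main obstacle to be the same-cycle case of the symmetric-difference analysis, where producing a genuine $1$-factor of $G^*$ requires carefully tracking the alternation of $M_1$- and $M_2$-edges around the cycle and exploiting that $y$ is adjacent to both ends of the non-edge $xz$.
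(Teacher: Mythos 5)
The paper never proves this statement: it is Tutte's $1$-factor theorem, imported from the literature (\cite{tutte-1-factor}) solely to derive Corollary~\ref{corollary:2kEdgeConnected}, so there is no in-paper argument to compare yours against. Judged on its own, your proof is the classical edge-maximality argument (essentially Lov\'asz's proof, the one found in standard textbooks), and it is correct in outline and in almost all details: the necessity direction, the preservation of the Tutte condition under edge addition, the passage to an edge-maximal counterexample $G^*$, the claim that every component of $G^*-U$ is complete for $U$ the set of universal vertices, and the final explicit construction of a $1$-factor (including the parity count for matching the leftover vertices of $U$) all go through. Your opening reduction to simple graphs is also worth keeping, since in this paper ``graph'' means pseudograph and the theorem is applied in that generality. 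Two places need fleshing out in a full write-up. First, in the different-cycles case, ``the common edges elsewhere'' do not cover vertices lying on cycles of $M_1 \triangle M_2$ other than the two distinguished ones; the clean fix is to take $M_2 \triangle C$, where $C$ is the cycle through $yw$: this perfect matching avoids $yw$ and $xz$ and uses only edges of $G^*$. Second, in the same-cycle case, the rerouting you gesture at works but requires the bookkeeping you anticipate: relabel $x$ and $z$ so that, traversing the cycle from $y$ along $yw$, one reaches $z$ before $x$; then the $M_1$-edges of that traversed segment, the $M_2$-edges of the complementary segment (which contains $xz$ but not $yw$), and the genuine edge $yz$ together match all vertices of the cycle inside $G^*$, and $M_1$ handles the vertices off the cycle. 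Neither point is a gap in the approach, only in the level of detail of the sketch.
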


\begin{corollary} \label{corollary:2kEdgeConnected}
	Every $2k$-edge-connected $(2k+1)$-regular graph has a $1$-factor.
\end{corollary}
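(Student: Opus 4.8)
The plan is to verify the condition in Tutte's $1$-Factor Theorem directly: for every $S \subseteq V(G)$ I will show that the number of odd-order components of $G - S$ is at most $|S|$. First I would dispose of degenerate cases. If $k = 0$ then $G$ is $1$-regular, hence (loops being impossible at degree $1$) a disjoint union of single edges, which is its own $1$-factor; so assume $k \ge 1$, which forces $G$ to be connected. For $S = \varnothing$ the handshake lemma gives that $(2k+1)|V(G)|$ is even, so $|V(G)|$ is even and $G$ itself is a single even component, giving $0 \le 0$; the case $S = V(G)$ is vacuous. So I may assume $\varnothing \ne S \ne V(G)$.

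The key step is a parity-plus-connectivity estimate on each odd component. Let $C$ be an odd-order component of $G - S$. Summing degrees over $C$ gives $(2k+1)|C| = 2m_C + e(C,S)$, where $m_C$ counts the edges and loops internal to $C$ (each contributing $2$) and $e(C,S)$ is the number of edges joining $C$ to $S$; note that every edge leaving $C$ must land in $S$, since $C$ is a component of $G - S$. As $(2k+1)|C|$ is odd, $e(C,S)$ is odd. On the other hand, the edges counted by $e(C,S)$ form an edge cut separating $C$ from the rest of the connected graph $G$, so $2k$-edge-connectivity yields $e(C,S) \ge 2k$; being odd, $e(C,S) \ge 2k + 1$.

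To finish, I would double count the edges between $S$ and $G - S$. On one hand this total is at most $\sum_{v \in S} \deg(v) = (2k+1)|S|$. On the other hand, summing the bound $e(C,S) \ge 2k+1$ over all odd components $C$ shows it is at least $(2k+1)\,o(G-S)$, where $o(G-S)$ denotes the number of odd components of $G-S$. Comparing the two estimates gives $(2k+1)\,o(G-S) \le (2k+1)|S|$, hence $o(G-S) \le |S|$, which is exactly Tutte's condition, and the corollary follows.

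The only real subtlety — and the one point I would state carefully, since we are working with pseudographs — is the parity computation: loops and multiple edges inside $C$ must be seen to contribute an even amount to the degree sum, so that the parity of $e(C,S)$ is governed solely by the parity of $|C|$. Everything else is routine once the two lower bounds ($e(C,S)$ odd and $e(C,S) \ge 2k$) are combined into $e(C,S) \ge 2k+1$.
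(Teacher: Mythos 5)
Your proof is correct and takes exactly the route the paper intends: the paper states this result as an immediate corollary of Tutte's $1$-Factor Theorem without writing out the details, and your argument---each odd component $C$ of $G-S$ sends an odd number of edges to $S$ by regularity and parity, hence at least $2k+1$ by $2k$-edge-connectivity, so double counting gives $o(G-S)\le |S|$---is precisely the standard derivation being invoked. Your attention to loops contributing evenly to degree sums correctly handles the pseudograph setting, so nothing needs to change.
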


In Section~\ref{se:no-4-1-coloring}, we prove our results about $(4, 1)$-colorings.  In Section~\ref{se:no-4-1-factor}, we prove our results about $\{4, 1\}$-factors.  

\subsection{\texorpdfstring{$5$}{5}-regular graphs without \texorpdfstring{$(4, 1)$}{(4, 1)}-colorings}\label{se:no-4-1-coloring}

We begin by showing that a vertex-minimal $5$-regular graph with no $(4,1)$-coloring must satisfy several connectivity conditions.
An edge-coloring $c$ of $G$ \emph{extends} an edge-coloring $c'$ of $G'$ if $c(e)=c'(e)$ for all $e\in E(G)\cap E(G')$.

\begin{theo}\label{minNo41coloring}
 Let $G$ be a vertex-minimal $5$-regular graph without a $(4,1)$-coloring.
 \begin{enumerate}[(a)]
  \item $G$ is connected. \label{connected}
  \item $G$ is not $4$-edge-connected, i.e., contains an edge cut on $3$ edges. \label{not4EdgeConnected}
  \item $G$ has no minimal edge cut of size $2$. \label{no2cut}
  \item $G$ does not have two bridges. \label{no2bridges}
  \item Each bridge in $G$ has (precisely) one endpoint incident to two loops. \label{special1cut}
  \item The edges of any minimal edge cut of size $3$ in $G$ have a vertex in common, and this vertex is incident to a loop. \label{special3cut}
 \end{enumerate}
\end{theo}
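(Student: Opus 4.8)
The plan is to treat $G$ as a vertex-minimal counterexample and to prove every part by the same reduction scheme: delete the small edge set in question, complete each of the two resulting pieces to a \emph{smaller} $5$-regular graph, invoke minimality to $(4,1)$-color each piece, and then reassemble the pieces into a $(4,1)$-coloring of $G$, contradicting the choice of $G$. Since colors are arbitrary labels, I may always relabel the colors of one piece so that the two colorings agree on whatever single constraint the gluing imposes. Part~\ref{connected} is immediate: if $G$ were disconnected, each component is a smaller $5$-regular graph, hence $(4,1)$-colorable by minimality, and the disjoint union of these colorings is a $(4,1)$-coloring of $G$. For part~\ref{not4EdgeConnected}, if $G$ were $4$-edge-connected then Corollary~\ref{corollary:2kEdgeConnected} (with $k=2$) would give a $1$-factor $M$; coloring $M$ with one color and all remaining edges (including loops) with a second color is a $(4,1)$-coloring. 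Hence $G$ is not $4$-edge-connected and, being connected, has an edge cut of at most three edges.

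The decisive distinction among the remaining parts is the \emph{parity} of the cut. Throughout I use that a minimal edge cut of a connected graph splits it into exactly two connected pieces $G_1,G_2$, and that in a $5$-regular graph a cut separating off $G_1$ has size congruent to $|V(G_1)|$ modulo $2$; thus a $2$-cut has both sides of even (so $\ge 2$) order, while a $1$-cut or $3$-cut has both sides of odd order. For part~\ref{no2cut}, suppose $\{e_1,e_2\}$ is a minimal $2$-cut with $e_i=a_ib_i$, $a_i\in G_1$, $b_i\in G_2$. I \emph{suppress} the cut: let $H_1$ be $G_1$ together with the new edge $a_1a_2$ (a loop at $a_1$ if $a_1=a_2$), and symmetrically $H_2$ be $G_2$ with $b_1b_2$. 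No vertex is added, so both $H_i$ are $5$-regular with fewer vertices than $G$; by minimality each has a $(4,1)$-coloring. The color of the suppressing edge tells us exactly how to color both of $e_1,e_2$ so that $a_1,a_2$ (respectively $b_1,b_2$) see the same multiset of colors as before; relabeling so the two suppressing edges receive a common color makes the two colorings compatible, and reassembling yields a $(4,1)$-coloring of $G$. The point of the parity remark is that an even cut can be eliminated using only edges, whereas an odd cut leaves an odd number of half-edges that cannot be paired off by edges alone—this is exactly why $2$-cuts are forbidden outright while $1$-cuts and $3$-cuts survive in restricted form.

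For the odd cuts I complete each side using a gadget that carries a loop. For a bridge $xy$ (part~\ref{special1cut}), if a side, say $G_1\ni x$, has at least two vertices, I attach to $G_1$ a single new vertex bearing one loop and one edge to $x$ (a pendant \emph{two-loop vertex}), obtaining a smaller $5$-regular graph $H_1$; the loop is forced to be the majority color, so the attaching edge is the unique minority edge at the new vertex and its color transfers cleanly to the bridge. If both sides have at least two vertices, reassembling the two such colorings gives a $(4,1)$-coloring of $G$, a contradiction. Hence at least one side is a single vertex, which, having degree $5$ with one bridge edge, must carry two loops; and both sides being such a vertex yields the two-vertex graph that is plainly $(4,1)$-colorable, so exactly one endpoint is incident to two loops. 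Part~\ref{no2bridges} then follows: two bridges produce two distinct pendant two-loop vertices $y,y'$; deleting them leaves a core in which their neighbors $x,x'$ are each deficient, and completing the core by an edge $xx'$ (or a loop at $x$ if $x=x'$) gives a smaller $5$-regular graph whose $(4,1)$-coloring extends—color both bridges by the completing edge's color and color each pendant's loops with any other color—to all of $G$.

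The main obstacle is part~\ref{special3cut}. Given a minimal $3$-cut with $G_1$-endpoints $a_1,a_2,a_3$ and $G_2$-endpoints $b_1,b_2,b_3$, I complete each nontrivial side by attaching a single new vertex carrying a loop and three edges to that side's endpoints. As before the loop is forced to be the majority color, so \emph{exactly one} of the three attaching edges is the odd one out. Reassembling now requires the position of this odd edge to agree on the two sides: global relabeling fixes the color \emph{values} but not which of $e_1,e_2,e_3$ is singled out, so a direct gluing can fail. The crux will be to show that unless the cut is centered at a single loop-vertex—the degenerate case in which one side is a lone vertex with a loop and the three cut edges, so there is no smaller graph to reduce to—one can always force the odd positions to coincide. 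The tools for this are the flexibility present at any endpoint where the attaching edge is itself a minority edge (there the cut edge's color may be chosen freely rather than being forced), together with the parity relation that, for each color $\gamma$, the number of cut edges colored $\gamma$ is congruent modulo $2$ to the number of vertices of the given side whose unique minority edge has color $\gamma$. Carrying out this case analysis on where the forced minority edges fall, and checking the few degenerate configurations by hand, is the technical heart of the theorem; its conclusion is precisely that the three cut edges share a vertex and that vertex carries a loop.
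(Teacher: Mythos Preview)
Parts (\ref{connected})--(\ref{special1cut}) are essentially the paper's arguments, with two harmless differences: you prove (\ref{special1cut}) first and use it to derive (\ref{no2bridges}), which works; and in (\ref{special1cut}) you write ``one loop'' for the pendant gadget when you clearly mean \emph{two} loops (as you immediately call it a ``two-loop vertex''---a vertex with one loop and one edge has degree~$3$, not~$5$).

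The real gap is in (\ref{special3cut}). You correctly set up the one-loop gadget on each side and correctly identify the obstruction: the gadget vertex forces a $2$--$1$ color split among its three attaching edges, and the two sides may single out \emph{different} positions, so global relabeling is not enough. But your proposed resolution---``flexibility'' at endpoints where the attaching edge happens to be a minority edge, together with a parity relation---is only a sketch, and it does not obviously close. In particular, nothing prevents the situation where on each side all three attaching edges are \emph{majority} edges at their respective endpoints; then each cut edge's color is rigidly determined by that side's coloring, and the two forced $2$--$1$ patterns (say $\{e_1,e_2\}$ versus $\{e_3\}$ on one side and $\{e_1,e_3\}$ versus $\{e_2\}$ on the other) are simply incompatible. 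Your parity relation is correct but does not rule this out.

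The paper resolves this by a different device. It considers \emph{all} one-vertex $5$-regular extensions of $G_2$, not just the one-loop gadget, and splits into two cases. If some extension of each side admits a $(4,1)$-coloring with all new edges the same color, glue those (this is the easy case). Otherwise, without loss of generality \emph{every} extension of $G_2$ forces at least two colors among its new edges. Now use the one-loop gadget on the $G_1$-side (obtaining, say, the split $\{uv,wx\}$ versus $\{yz\}$), but on the $G_2$-side use a \emph{different} extension: add the single edge $vx$ inside $G_2$ and attach a two-loop pendant to $z$. The new edges are exactly $vx$ and the pendant edge at $z$, and the case hypothesis forces them to have different colors---which is precisely the split $\{uv,wx\}$ versus $\{yz\}$, already aligned with the $G_1$-side. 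This asymmetric choice of gadgets, driven by the dichotomy on $\mathcal H_2$, is the idea your argument is missing.
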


\begin{proof}
(\ref{connected}) This follows from vertex-minimality.

\medskip

(\ref{not4EdgeConnected}) This is a consequence of Corollary~\ref{corollary:2kEdgeConnected} with $k=2$. 

\medskip

(\ref{no2cut}) Assume $\{uv,wx\}$ is a minimal edge cut, so $G - \{uv,wx\}$ is disconnected, but $G-uv$ and $G-wx$ are both connected.
  Then $G-\{uv,wx\}$ has precisely~two components, $G_1$ and~$G_2$, and, without loss of generality, $u$,~$w \in V(G_1)$ and $v$,~$x \in V(G_2)$.
   We obtain $5$-regular graphs $G'_1 = G_1 + uw$ and $G'_2 = G_2 + vx$ by adding a new edge (possibly a loop or parallel edge) to each component.
  By the assumption of vertex-minimality, both graphs $G'_1$ and $G'_2$ have $(4,1)$-colorings.
  Consider such colorings $c_i$ of $G'_i$ for $i \in \{1,2\}$ such that $c_1(uw)=c_2(vx)=1$.
  Note that all edges of $E(G)\setminus\{uv,wx\}$ are contained in exactly one of $G'_1$ or $G'_2$.
  So we obtain a $(4,1)$-coloring of $G$ by coloring $uv$ and $wx$ with color~$1$ and all other edges according to $c_1$ and $c_2$, a contradiction. 

\medskip

(\ref{no2bridges}) Assume $uv$ and $wx$ are bridges in $G$.
  Then $G-\{uv,wx\}$ has three components.
  Without loss of generality, assume that $u$ and $w$ are contained in the same component.
  We obtain two $5$-regular graphs by adding the edges $uw$ and $vx$  (possibly loops or parallel edges).
  The proof proceeds exactly as in (\ref{no2cut}). 

\medskip

(\ref{special1cut})  If there is a bridge with both endpoints incident to two loops, then there are no other edges and the graph is easily $(4,1)$-colorable.
  Assume that there is a bridge $uv$ with each endpoint incident to at most one loop.
  Then $G-uv$ has two components $G_1$ and $G_2$, each with at least~$2$ vertices.
  We obtain a $5$-regular graph from $G_1$ (respectively, $G_2$) by adding a new vertex incident to two loops and to $u$ (respectively, to $v$).
  Both graphs have $(4,1)$-colorings by assumption of vertex-minimality.
  Much as before, we obtain a $(4,1)$-coloring of $G$ by choosing the same color for the new edges incident to $u$ and $v$, a contradiction. 

\medskip

(\ref{special3cut}) Consider distinct edges $uv$, $wx$ and $yz$ forming a minimal edge cut of size~$3$.
  First observe that a vertex which is incident to all three edges is incident to a loop due to statements (\ref{no2cut}) and (\ref{no2bridges}) of this theorem.
  Thus assume that there is no such vertex.
  Removing the three edges from $G$ yields exactly two components $G_1$ and~$G_2$, each with at least $2$ vertices.
  Without loss of generality assume $u$, $w$ and $y$ are in $G_1$ and $v$, $x$ and $z$ are in $G_2$.

  Let $\mathcal{H}_i$ denote the set of all $5$-regular graphs that contain $G_i$ as a subgraph and have one more vertex than $G_i$, $i \in \{1,2\}$.
  Note that $\mathcal{H}_i\neq\emptyset$.
  We consider each $H\in\mathcal{H}_i$ with a fixed copy~$K=K(H)$ of $G_i$ and call edges in $E(H)\setminus E(K)$ \emph{new} if they are incident to vertices of~$K$.
  By assumption all graphs in $\mathcal{H}_1\cup \mathcal{H}_2$ are $(4,1)$-colorable.
  Assume first that, for all $i\in\{1,2\}$, there is a graph $H\in\mathcal{H}_i$ having a $(4,1)$-coloring where all ($2$ or $3$) new edges are of the same color.
  Then, much as before, we obtain a $(4,1)$-coloring of $G$, a contradiction.
  
  So, assume that for any graph $H\in\mathcal{H}_2$ and for any $(4,1)$-coloring of~$H$ the new edges in $H$ are not all of the same color.
  Consider a $(4,1)$-coloring~$c_1$ of the graph in $\mathcal{H}_1$ obtained from $G_1$ by adding a new vertex $p$ incident to one loop and connected to $u$, $w$ and $y$ by new edges.
  Without loss of generality assume that $c_1(up)=c_1(wp)\neq c_1(yp)$.
  Further consider a $(4,1)$-coloring $c_2$ of the graph in $\mathcal{H}_2$ obtained from $G_2$ by adding by adding a new vertex $q$ incident to two loops and edges $vx$ and~$qz$.
  Then $c_2(vx)\neq c_2(qz)$ by assumption.
  Therefore we obtain a $(4,1)$-coloring of~$G$ from $c_1$ and $c_2$ as before, a contradiction. \qedhere
\end{proof}

Now we prove a number of conditions involving loops, parallel edges, or forbidden subgraphs (see Figure~\ref{fig:41coloringForbidden}).

\begin{theo} \label{le:5regDoubleEdgesAndLoops}
Let $G$ be a vertex-minimal $5$-regular graph without a $(4, 1)$-coloring.
\begin{enumerate}[(a)]
  \item $G$ does not contain a $4$-regular subgraph with at least~$2$ vertices. \label{no4regular}
  \item $G$ does not have $3$ parallel edges. \label{3parallel}
  \item $G$ does not contain a path of length three consisting of double edges. \label{noDouble3Path}
  \item No vertex of~$G$ that has a loop is incident to a double edge. \label{noLoopDoubleIncidence}
  \item No vertices with loops are adjacent. \label{noLoopAdjacence}
  \item $G$ contains at least $5$ loops. \label{5LoopMin}
  \item There do not exist $u_1$, $u_2$, $u_3$, $v_1$, $v_2$, $v_3 \in V(G)$ such that the $u_i$ have loops and such that for each $i$ and $j$, $u_i$ is adjacent to $v_j$ (that is, there is no $K_{3,3}$ with one loop on each vertex of one side of the vertex partition). \label{noK33withLoops}
  \item No vertex is adjacent to more than~$3$ vertices with loops. \label{no4LoopNeighbors}
  \item No $4$-vertex subgraph of $G$ has $8$ or more edges. \label{Forbidden4}
\end{enumerate}
\end{theo}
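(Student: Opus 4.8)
The plan is to treat all nine statements with a single reduction scheme and then isolate the two parts where that scheme needs extra work. Throughout, $G$ is a vertex-minimal $5$-regular graph with no $(4,1)$-coloring. For each forbidden configuration I would assume $G$ contains it and then build a strictly smaller $5$-regular graph $G'$ (occasionally two graphs, as in the proof of Theorem~\ref{minNo41coloring}) by deleting the configuration and repairing degrees: every edge leaving the configuration is severed, and the resulting stubs on the \emph{outside} vertices are paired off into new edges (loops and parallel edges allowed); if the number of stubs is odd, I pair all but one and attach the last one to a fresh auxiliary vertex carrying two loops, which keeps $G'$ exactly $5$-regular while still reducing the vertex count. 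By vertex-minimality each component of $G'$ has a $(4,1)$-coloring $c'$, and the goal is always to lift $c'$ to a $(4,1)$-coloring of $G$, contradicting the hypothesis.

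The lifting uses one device repeatedly. Each new edge $ww'$ of $G'$ came from two severed edges $wx$, $w'x'$ (or a single severed loop); I recolor the two severed edges with the color $c'(ww')$, so the palette of every outside vertex is literally unchanged and those vertices stay $(4,1)$-colored. It then remains to color the edges inside the configuration so that each configuration vertex sees $4$ edges of one color and $1$ of another. The clean case is when the configuration is a $4$-regular subgraph $H$ (part (a)): every vertex of $H$ has degree $4$ inside $H$ and exactly one external edge, and counting degrees over $V(H)$ gives $2e_{\mathrm{in}}+a=5\lvert V(H)\rvert$, so the number $a$ of external edges has the same parity as $\lvert V(H)\rvert$ and the pairing/auxiliary-vertex step applies. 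Coloring all of $E(H)$ with a \emph{brand-new} color $X$ (an integer not used by $c'$) makes each vertex of $H$ incident to four edges of color $X$ and one inherited edge of color $\neq X$, a valid $(4,1)$-pattern, with no clash against the inherited colors precisely because $X$ is new. I regard part (a) as the backbone: it is the part that most cleanly exhibits the whole method, and it is available as a lemma for the later parts.

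Parts (b)--(f) and (h) are lighter variants of the same idea, and several of them can even be reduced to configurations already excluded by Theorem~\ref{minNo41coloring} or by part (a). For three parallel edges (b), a double-edge path of length three (c), a loop meeting a double edge (d), two adjacent looped vertices (e), fewer than five loops (f), or a vertex with four looped neighbors (h), I would delete the few incident vertices, repair with the pairing/auxiliary construction, and lift, choosing the colors of the (now bounded) configuration edges by hand so that each affected vertex gets a $4/1$ split; the fresh-color trick again removes any conflict with the inherited colors. The only genuine bookkeeping is the parity of the stub count and checking that the small configuration admits \emph{some} local $4/1$ assignment compatible with the inherited colors at its boundary, which is a finite check in each case.

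The two parts that resist the clean scheme, and where I expect the real difficulty, are (g) and (i). For (g), deleting the three looped vertices $u_1,u_2,u_3$ of the purported $K_{3,3}$ severs three edges at each $v_j$, i.e.\ nine stubs---an odd number---so the auxiliary-loop-vertex repair is forced, and, more importantly, the three forced edges at each $v_j$ must be recolored consistently with $v_j$'s inherited palette; the obstacle is to show that the coloring of the reduced graph can be chosen (or permuted) so that all nine lifts simultaneously respect the $4/1$ constraint at $v_1,v_2,v_3$. For (i), a subgraph on four vertices with at least eight edges need \emph{not} be $4$-regular, so the fresh-color device fails directly: a configuration vertex of internal degree $3$ and external degree $2$ would receive three edges of color $X$ and two inherited edges, which is not a $4/1$ split. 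The plan there is to case-split on the internal degree sequence, which sums to at least $16$ over four vertices, each at most $5$: the regular case $(4,4,4,4)$ is handled by part (a); the case $(5,5,5,5)$ makes those four vertices an entire component, so by connectivity (Theorem~\ref{minNo41coloring}(a)) $G$ is a small graph to be colored directly; and the intermediate sequences are eliminated either by producing a $4$-regular subgraph inside the configuration or by a direct reduction. This case analysis, rather than any single clever construction, is the main obstacle, with (g)'s simultaneous-consistency requirement a close second.
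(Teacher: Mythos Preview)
Your reduction-and-lift scheme is exactly the engine the paper uses for parts (a)--(e), (h), and (essentially) (g) and (i), so on those parts your plan is sound and close to the published argument. Two remarks on details: in (a) the fifth edge at a vertex of $H$ need not leave $V(H)$; it can be an edge of $G$ with both endpoints in $V(H)$ but not in $E(H)$, and such edges must be given a color different from your fresh $X$---the paper handles this explicitly. In (g) the paper deletes all six vertices (the $u_i$ \emph{and} the $v_j$), first using part (a) to force the $v_j$ independent, which leaves only six outside stubs to match in three pairs; this avoids the nine-stub consistency problem you flagged.

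The genuine gap is part (f). You list it among the ``lighter variants'' handled by deleting a few vertices and repairing, but ``$G$ has fewer than five loops'' is not a local configuration: there is nothing to delete. The paper's proof of (f) is entirely different in kind. Since $G$ has no $(4,1)$-coloring it has no perfect matching, so Tutte's theorem yields a set $S$ with more odd components in $G-S$ than $|S|$; a parity count forces at least $|S|+2$ odd components. Then the edge-cut results of Theorem~\ref{minNo41coloring}(\ref{special1cut},\ref{special3cut}) imply that any odd component $C_i$ with $e(S,C_i)<5$ is a single vertex carrying loops, and summing $e(S,C_i)+2\ell_i\ge 5$ over all odd components against $\sum e(S,C_i)\le 5|S|$ gives $k\ge\frac{5}{2}(t-|S|)\ge 5$. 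Your outline needs to replace the reduction stub for (f) with this Tutte-plus-counting argument.
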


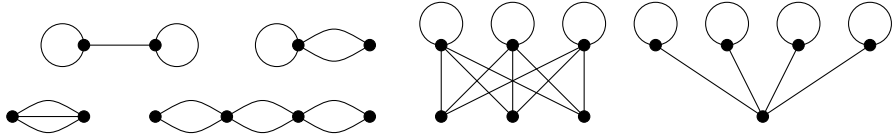
\begin{figure}[ht]
	\centering

	\begin{tikzpicture}[scale=.95]
	\foreach \x in {0,1}{ \filldraw (\x,0) circle(.08); };
	\foreach \y in {-.3,0,.3}{ \draw (0,0) .. controls(.5,\y) .. (1,0); };

    \begin{scope}[xshift=2cm]
    	\foreach \x in {0,1,2}{
    		\filldraw (\x,0) circle(.08);
    		\draw (\x,0) .. controls(\x+.5,.3) .. (\x+1,0);
    		\draw (\x,0) .. controls(\x+.5,-.3) .. (\x+1,0);
    	};
    	\filldraw (3,0) circle(.08);
    \end{scope}
    
    \begin{scope}[xshift=4cm, yshift=1cm]
    	\draw (-.3,0) circle(.3);
    	\foreach \x in {0,1}{ \filldraw (\x,0) circle(.08); };
    	\foreach \y in {-.3,.3}{ \draw (0,0) .. controls(.5,\y) .. (1,0); };
    \end{scope}
    
    \begin{scope}[xshift=1.5cm, yshift=1cm]
    	\draw (-.5,0) -- (.5,0);
    	\foreach \x in {-.5,.5}{ \filldraw (\x,0) circle(.08); };
    	\foreach \x in {-.8,.8}{ \draw (\x,0) circle(.3); };
    \end{scope}
    
    \begin{scope}[xshift=6cm]
    	\foreach \x in {0,1,2}{
    		\foreach \y in {0,1}{ \filldraw (\x,\y) circle(.08); };
    		\draw (\x,1.3) circle(.3);
    		\foreach \z in {0,1,2} { \draw (\x,0) -- (\z,1); };
    	};
    \end{scope}
    
    \begin{scope}[xshift=9cm]
    	\foreach \x in {0,1,2,3}{
    		\draw (1.5,0) -- (\x,1);
    		\filldraw (\x,1) circle(.08);
    		\draw (\x,1.3) circle(.3);
    	};
    	\filldraw (1.5,0) circle(.08);
    \end{scope}

\end{tikzpicture}

	\caption{From Theorem~\ref{le:5regDoubleEdgesAndLoops}~(\ref{3parallel},~\ref{noDouble3Path},~\ref{noLoopDoubleIncidence},~\ref{noLoopAdjacence},~\ref{noK33withLoops},~\ref{no4LoopNeighbors}), forbidden subgraphs in a vertex-minimal $5$-regular graph with no $(4,1)$-coloring.}\label{fig:41coloringForbidden}
\end{figure}

\begin{proof}
(\ref{no4regular}) Suppose for contradiction that $G$ has a $4$-regular subgraph~$H$ with at least~$2$ vertices.
  Let $F$ denote the set of edges $uv$ in $G$ with $u\in V(H)$ and $v\not\in V(H)$.
  We obtain a $5$-regular graph~$G'$ from $G$ by removing the vertices of~$H$ 
  and adding some \emph{new} edges between the vertices of degree less than~$5$ and, if there is an odd number of such vertices, one new vertex with two loops.  
  Let $F'$ denote the set of new edges in $G'$, except for the loops incident to the new vertex, if such exists.
  
  By vertex-minimality, $G'$ has a $(4,1)$-coloring $c$.
  We extend this to a coloring of~$G$ as follows.
  Assign a color $k$ not used by $c$ to all edges in $H$ and a color different from $k$ to all edges in $E(G)\setminus E(H)$ having both endpoints in $H$.
  Each edge in $F$ shares a vertex with least one edge in $F'$.
  Consider an injective map $f:F \to F'$ such that $e$ and $f(e)$ have a common vertex for all $e\in F$.
  Then color each $e\in F$ with color $c(f(e))$.
  This coloring is a $(4,1)$-coloring of $G$, a contradiction. 

\medskip

(\ref{3parallel}) Assume that there are at least three edges between vertices $u$ and $v$.
  Let $F$ denote the set of edges incident to $u$ or $v$ but not both.
  Observe that $G$ has at least~3 vertices, as the $2$-vertex $5$-regular graphs are easily $(4,1)$-colorable.
  Consider the $5$-regular graph $G'$ obtained from $G$ by removing $u$ and $v$ and adding a matching between the (remaining) neighborhood of $u$ and the (remaining) neighborhood of $v$, possibly creating parallel edges and loops.
  By assumption $G'$ has a $(4,1)$-coloring $c$.
  We extend this to a coloring of~$G$ by coloring the edges in $F$ with the colors of the corresponding new edges under $c$.
  Then $u$ and $v$ are either both incident to edges of the same color only, or both incident to an edge of one color and an edge of a second color.
  In either case we can color the parallel edges between $u$ and $v$ such that we obtain a $(4,1)$-coloring of $G$, a contradiction. 

\medskip

(\ref{noDouble3Path})  Let $v_1$, $v_2$, $v_3$, and $v_4$ denote the vertices of a double path in $G$.  
Let $u_2$ be the other neighbor of $v_2$ and $u_3$ the other neighbor of $v_3$.  
We assume that $u_2\neq v_3$ and $u_3\neq v_2$ due to part~(\ref{3parallel}) of this Theorem.
Remove $v_2$ and $v_3$ from $G$, add two edges between $v_1$ and $v_4$, and add an edge (possibly a loop or multiple edge) between $u_2$ and $u_3$.  
Let $G'$ denote the resulting graph, which, by hypothesis, has a $(4, 1)$-coloring.  
We consider several cases (see Figure~\ref{fig:noDoub}).

\begin{figure}[ht]
\centering
\begin{tikzpicture}[scale=.5]

\begin{scope}
	\draw (-1,2.5) .. controls (0,2.2) .. (1,2.5);
	\filldraw (-3,1) circle(.14);
	\foreach \y in {1.4, .6} { \draw (-3,1) .. controls (0,\y) .. (3,1); };
	\filldraw (3,1) circle(.14);
	
	\draw [->,very thick] (0,.2) -- (0,-.8);
	
	\draw (-1,-3) .. controls (0, -2.7) .. (1, -3);
	\draw[dashed,red] (-1,-3) .. controls (0, -3.3) .. (1, -3);
	\foreach \x in {-3, 1} {
		\foreach \y in {-2.7, -3.3} { \draw (\x,-3) .. controls (\x+1,\y) .. (\x+2,-3); };
		\filldraw (\x,-3) circle(.14);
	};
	\foreach \x in {-1,1} {
		\filldraw (\x,2.5) circle(.14);
		\filldraw (\x,-1.5) circle(.14) -- (\x, -3);
	};
	\filldraw (3,-3) circle(.14);
	\filldraw (-1,-3) circle(.14);
\end{scope}

\begin{scope}[xshift=105mm]
	\draw[dashed,red] (-1,2.5) .. controls (0,2.2) .. (1,2.5);
	\foreach \y in {1.4, .6} { \draw (-3,1) .. controls (0,\y) .. (3,1); };
	\filldraw (-3,1) circle(.14);
	\filldraw (3,1) circle(.14);
	
	\draw [->,very thick] (0,.2) -- (0,-.8);
	
	\foreach \x in {-3, -1, 1} {
		\foreach \y in {-2.7, -3.3} { \draw (\x,-3) .. controls (\x+1,\y) .. (\x+2,-3); };
	};
	\foreach \x in {-1,1} {
		\draw[dashed,red] (\x,-1.5) -- (\x, -3);
		\filldraw (\x,2.5) circle(.14);
		\filldraw (\x,-1.5) circle(.14);
		\filldraw (\x*3,-3) circle(.14);
		\filldraw (\x,-3) circle(.14);
	};
\end{scope}

\begin{scope}[yshift=-85mm]
	\draw (-1,2.5) .. controls (0,2.2) .. (1,2.5);
	\draw (-3,1) .. controls (0, 1.4) .. (3, 1);
	\draw[dashed,red] (-3,1) .. controls (0, .6) .. (3, 1);
	\filldraw (3,1) circle(.14);
	\filldraw (-3,1) circle(.14);
	
	\draw [->,very thick] (0,.2) -- (0,-.8);
	
	\foreach \y in {-2.7, -3.3} { \draw (-1,-3) .. controls (0,\y) .. (1,-3); };
	\foreach \x in {-3, 1} {
		\draw (\x,-3) .. controls (\x+1,-2.7) .. (\x+2,-3);
		\draw[dashed,red] (\x,-3) .. controls (\x+1,-3.3) .. (\x+2,-3);
		\filldraw (\x,-3) circle(.14);
	};
	\foreach \x in {-1,1} {
		\filldraw (\x,2.5) circle(.14);
		\filldraw (\x,-1.5) circle(.14) -- (\x, -3);
	};
	\filldraw (3,-3) circle(.14);
	\filldraw (-1,-3) circle(.14);
\end{scope}

\begin{scope}[xshift=105mm, yshift=-85mm]
	\draw[dotted,thick,blue] (-1,2.5) .. controls (0,2.2) .. (1,2.5);
	\draw (-3,1) .. controls (0, 1.4) .. (3, 1);
	\draw[dashed,red] (-3,1) .. controls (0, .6) .. (3, 1);
	\filldraw (-3,1) circle(.14);
	\filldraw (3,1) circle(.14);
	
	\draw [->,very thick] (0,.2) -- (0,-.8);
	
	\foreach \y in {1,-3} {
		\foreach \dy in {-1,0,1} { \draw (-3,\y) -- (-4,\y+\dy); };
	};
	\foreach \y in {1,-3} {
		\foreach \dy in {-1,0,1} { \draw[dashed,red] (3,\y) -- (4,\y+\dy); };
	};
	
	\foreach \x in {-3, -1} { \draw[dotted,thick,blue] (\x,-3) .. controls (\x+1,-3.3) .. (\x+2,-3); };
	\foreach \x in {-1, 1} { \draw[dotted,thick,blue] (\x,-3) .. controls (\x+1,-2.7) .. (\x+2,-3); };
	\draw (-3,-3) .. controls (-2,-2.7) .. (-1,-3);
	\draw[dashed,red] (1,-3) .. controls (2,-3.3) .. (3,-3);
	
	\foreach \x in {-1,1} {
		\draw[dotted,thick,blue] (\x,-1.5) -- (\x, -3);
		\filldraw (\x,2.5) circle(.14);
		\filldraw (\x,-1.5) circle(.14);
	};
	\foreach \x in {-3, -1, 1, 3} {
		\filldraw (\x,-3) circle(.14);
	};
\end{scope}

\end{tikzpicture}

\caption{Extending the coloring of $G'$ to $G$ in the proof of Theorem~\ref{le:5regDoubleEdgesAndLoops}~(\ref{noDouble3Path}). } \label{fig:noDoub}
\end{figure}
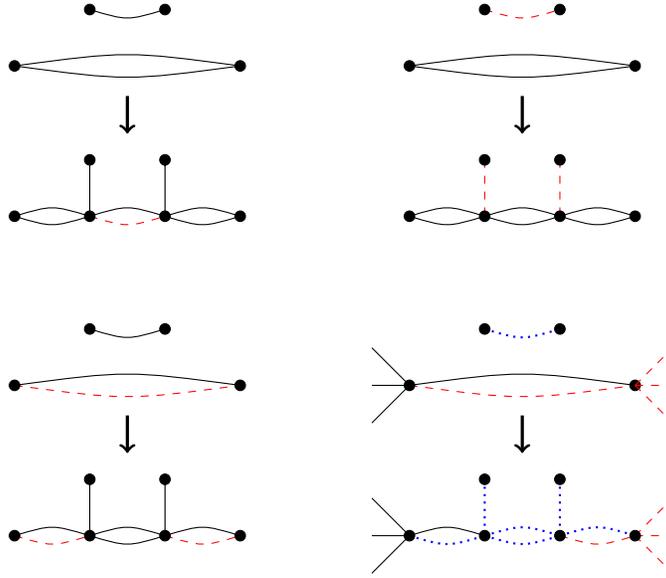

First, suppose both edges between $v_1$ and $v_4$, as well as the edge $u_2 u_3$, have color~$1$. 
Then in $G$, we give color~$1$ to all edges incident to $v_2$ or $v_3$ except for one of the edges between $v_2$ and $v_3$, to which we give color~$2$.

Second, suppose the edges between $v_1$ and $v_4$ have color~$1$ and the edge $u_2 u_3$ has color~$2$. 
Then in $G$, we give color~$2$ to $u_2 v_2$ and $u_3 v_3$ and color~$1$ to all other edges incident to $v_2$ or $v_3$.

Third, suppose $u_1 u_2$ and one of the edges between $v_1$ and $v_4$ have color~$1$, while the other has color~$2$. 
Then in $G$ we give color~$2$ to one of the edges between $v_1$ and $v_2$ and to one of the edges between $v_3$ and $v_4$.  We give color~$1$ to all other edges incident to $v_2$ or $v_3$.

Fourth, suppose all three edges have different colors. 
Then there are two subcases to consider.  
If one of the edges between $v_1$ and $v_4$ is the only edge of its color that is incident to both $v_1$ and $v_4$, then we may instead give it the same color as $u_2 u_3$ and so reduce the problem to the previous case.  
Assume, then, that the edges between $v_1$ and $v_4$ have colors $1$ and~$2$, that $v_1$ is incident to four edges with color~$1$, that $v_4$ is incident to four edges with color~$2$, and that $u_2 u_3$ has color~$3$.  
When we color $G$, we give color~$1$ to one of the edges between $v_1$ and $v_2$, color~$2$ to one of the edges between $v_3$ and $v_4$, and color~$3$ to all other edges incident to $v_2$ or $v_3$.

We have shown that in all four cases, we may extend a $(4, 1)$-coloring of~$G'$ to a $(4, 1)$-coloring of~$G$, which is a contradiction. 

\medskip

(\ref{noLoopDoubleIncidence}) Suppose to the contrary that $u$ is a vertex with a loop and that there is a double edge between $u$ and some other vertex~$v$.  
Observe that $v$ cannot have a loop: if it did, then $u$ and $v$ would each have exactly one neighbor outside of~$\{u, v\}$.  
This is a minimal edge cut of size~$2$, which contradicts Theorem~\ref{minNo41coloring}~(\ref{no2cut}).

Thus, $u$ sends one edge to a vertex~$w$ outside of~$\{u, v\}$, while $v$ sends three, to vertices $x$, $y$, and~$z$.  
We remove $u$ and $v$ from $G$ and create a $5$-regular graph~$G'$ by adding edges~$e = wx$ and~$f = yz$.  
(As usual, we may create loops or multiple edges.)  
By hypothesis, $G'$ has a $(4, 1)$-coloring.  
If $e$ and $f$ both have color~$1$, then we may extend the coloring to $G$ by coloring all edges incident to $u$ or to $v$ with color~$1$, 
 except for one edge between $u$ and $v$, to which we give color~$2$.  
If $e$ has color~$1$ and $f$ has color~$2$, 
 then we extend the coloring to $G$ by giving color~$1$ to both~$uw$ and~$vx$ and color~$2$ to all other edges incident to $u$ or $v$.  
In either case, we have a contradiction. 

\medskip

(\ref{noLoopAdjacence}) Suppose that $u$ and $v$ are adjacent vertices with loops.  By part~(\ref{noLoopDoubleIncidence}), there is exactly one edge between $u$ and $v$.
Observe that neither $u$ nor $v$ can have two loops.
Indeed, if both have two loops, then $G[\{u, v\}]$ is a component and obviously has a $(4, 1)$-coloring. 
If, say, $v$ has two loops but $u$ has only~one, then $u$ sends two edges to vertices outside of~$\{u, v\}$.  
These edges form a minimal edge cut of size~$2$, which contradicts Theorem~\ref{minNo41coloring}~(\ref{no2cut}).

Thus, we may assume that $u$ and $v$ are incident to only one loop each and hence both send two edges to vertices outside of~$\{u, v\}$.
Delete $u$ and $v$ and form a new $5$-regular graph~$G'$ by adding a matching between the (remaining) neighborhood of~$u$ and the (remaining) neighborhood of~$v$. 
By hypothesis, $G'$ has a $(4, 1)$-coloring.
Then we obtain a $(4, 1)$-coloring of $G$ much as in part~(\ref{3parallel}), a contradiction. 

\medskip

(\ref{5LoopMin}) If $X$ and $Y$ are disjoint subsets of~$V(G)$, let $e(X, Y)$ denote the number of edges between $X$ and $Y$.
Since $G$ does not admit a $(4,1)$-coloring, it does not contain a perfect matching. 
This means that there is a set~$S \subset V(G)$ such that the number of components of $G - S$ of odd order is strictly greater than~$|S|$. 
Let $C_1$, \ldots, $C_t$ be the components of $G - S$ of of odd order. 
Then $5|V(C_i)|=2|E(C_i)|+e(S,C_i)$ and hence $e(S,C_i)$ is odd, $1\leq i\leq t$.
Similarly, there is an even number of edges between $S$ and a component of $G - S$ of even order.
Therefore $|S| \equiv t \pmod 2$, and thus $t \geq |S| + 2$.

Recall from Theorem~\ref{minNo41coloring}~(\ref{special1cut},~\ref{special3cut}) that if $G$ contains a bridge, 
then one of the endpoints of the bridge is incident to two loops, and if $G$ contains a minimal edge cut of size~$3$, then its edges share an endpoint which is incident to a loop. 
Therefore, either $e(S,C_i) \geq 5$ or there is only~one vertex in $C_i$ and this vertex is adjacent to $\ell\geq 1$ loops.
In the latter case, $e(S, C_i) + 2\ell = 5$.
Let $C = \bigcup_{i=1}^t C_i$ and let $k$ be the total number of loops in $G$.
Then
\[
5t \leq \bigl\lvert e(S, C) \big\rvert + 2k \leq 5|S| +2k
\]
and hence
\[
k \geq \frac{5}{2}(t - |S|) \geq \frac{5}{2} \cdot 2 = 5,
\]
as desired. 

\medskip

(\ref{noK33withLoops}) By part~(\ref{no4regular}), we may assume that the $v_i$ form an independent set, because if, say, $v_1v_2$ were an edge in $G$, then $\{v_1, v_2, u_1, u_2, u_3\}$ would induce a $4$-regular subgraph of $G$. 
Delete all of the $u_i$ and the $v_i$ and form a new $5$-regular graph~$G'$ by adding a matching~$M = \{e_{12}, e_{23}, e_{31}\}$ among the neighborhoods of the $v_i$ such that each edge~$e_{ij}$ (which may be a loop) has one endpoint in $N(v_i)$ and the other in $N(v_j)$.

By hypothesis, $G'$ has a $(4, 1)$-coloring~$c$.  
When we extend this coloring to $G$, we will give $c(e_{ij})$ to one edge incident to $v_i$ and to one edge incident to~$v_j$.  
Furthermore, we will give to each vertex~$v_i$ an ordered triple~$(a_1,a_2,a_3) := (c(v_i u_1), c(v_i u_2), c(v_i u_3))$.
There are three cases we must consider (see Figure~\ref{fig:noK33}).

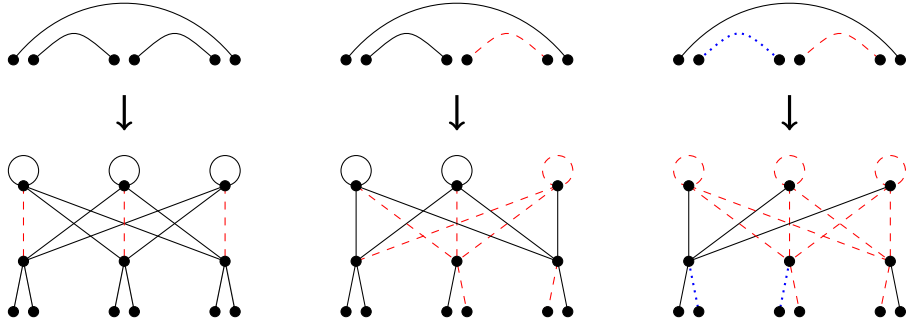
\begin{figure}[ht]
\centering
\begin{tikzpicture}[scale=.67]

\begin{scope}
	\draw (-2.2,1) .. controls (-1,2.5) and (1, 2.5) .. (2.2,1);
	\foreach \c in {-1,1} { \draw (1.8*\c, 1) .. controls (1*\c,1.7) .. (.2*\c, 1); };
	\foreach \x in {-2.2, -1.8, -.2, .2, 1.8, 2.2} { \filldraw (\x,1) circle(1mm); };
	
	\draw [->,very thick] (0,.3) -- (0,-.4);
	
	\foreach \x in {0, 2} {
		\draw (\x,-1.5) -- (-2, -3);
	};
	\foreach \x in {-2, 2} {
		\draw (\x,-1.5) -- (0, -3);
	};
	\foreach \x in {-2, 0} {
		\draw (\x,-1.5) -- (2, -3);
	};
	\foreach \x in {-2, 0, 2} {
		\draw[dashed,red] (\x,-1.5) -- (\x,-3);
		\draw (\x,-1.2) circle(3mm);
		\foreach \y in {-1.5,-3} { \filldraw (\x,\y) circle(1mm); };
		\foreach \dx in {-.2, .2} { \filldraw (\x, -3) -- (\x+\dx, -4) circle(1mm); };
	};
	
\end{scope}

\begin{scope}[xshift=66mm]
	\draw (-2.2,1) .. controls (-1,2.5) and (1, 2.5) .. (2.2,1);
	\draw (-1.8, 1) .. controls (-1,1.7) .. (-.2, 1);
	\draw[dashed,red] (1.8, 1) .. controls (1,1.7) .. (.2, 1);
	\foreach \x in {-2.2, -1.8, -.2, .2, 1.8, 2.2} { \filldraw (\x,1) circle(1mm); };
	
	\draw [->,very thick] (0,.3) -- (0,-.4);
	
	\draw (-2, -1.2) circle(3mm);
	\draw[dashed,red] (2,-1.5) -- (-2,-3);
	\draw (-2.2, -4) -- (-2,-3) -- (-1.8,-4);
	\draw (0, -1.2) circle(3mm);
	\draw[dashed,red] (0,-1.5) -- (0,-3) -- (.2, -4);
	\draw (0,-3) -- (-.2,-4);
	\draw[dashed,red] (2, -1.2) circle(3mm);
	\draw (2,-1.5) -- (2,-3) -- (2.2, -4);
	\draw[dashed,red]  (2,-3) -- (1.8,-4);
	
	\foreach \x in {0, -2} {
		\draw (\x,-1.5) -- (-2, -3);
	};
	\foreach \x in {-2, 2} {
		\draw[dashed,red] (\x,-1.5) -- (0, -3);
	};
	\foreach \x in {-2, 0} {
		\draw (\x,-1.5) -- (2, -3);
	};
	\foreach \x in {-2, 0, 2} {
		\foreach \y in {-1.5,-3} { \filldraw (\x,\y) circle(1mm); };
		\foreach \dx in {-.2, .2} { \filldraw (\x+\dx, -4) circle(1mm); };
	};
\end{scope}

\begin{scope}[xshift=132mm]
	\draw (-2.2,1) .. controls (-1,2.5) and (1, 2.5) .. (2.2,1);
	\draw[dotted,thick,blue] (-1.8, 1) .. controls (-1,1.7) .. (-.2, 1);
	\draw[dashed,red] (1.8, 1) .. controls (1,1.7) .. (.2, 1);
	\foreach \x in {-2.2, -1.8, -.2, .2, 1.8, 2.2} { \filldraw (\x,1) circle(1mm); };
	
	\draw [->,very thick] (0,.3) -- (0,-.4);
	
	\draw[dashed,red] (-2, -1.2) circle(3mm);
	\draw (-2,-1.5) -- (-2,-3) -- (-2.2, -4);
	\draw[dotted,thick,blue] (-2,-3) -- (-1.8,-4);
	\draw[dashed,red] (0, -1.2) circle(3mm);
	\draw[dashed,red] (0,-1.5) -- (0,-3) -- (.2, -4);
	\draw[dotted,thick,blue] (0,-3) -- (-.2,-4);
	\draw[dashed,red] (2, -1.2) circle(3mm);
	\draw[dashed,red] (2,-1.5) -- (2,-3) -- (1.8, -4);
	\draw (2,-3) -- (2.2,-4);
	
	\foreach \x in {0, 2} {
		\draw (\x,-1.5) -- (-2, -3);
	};
	\foreach \x in {-2, 2} {
		\draw[dashed,red] (\x,-1.5) -- (0, -3);
	};
	\foreach \x in {-2, 0} {
		\draw[dashed,red] (\x,-1.5) -- (2, -3);
	};
	\foreach \x in {-2, 0, 2} {
		\foreach \y in {-1.5,-3} { \filldraw (\x,\y) circle(1mm); };
		\foreach \dx in {-.2, .2} { \filldraw (\x+\dx, -4) circle(1mm); };
	};
\end{scope}

\end{tikzpicture}

\caption{Extending the coloring of $G'$ to $G$ in the proof of Theorem~\ref{le:5regDoubleEdgesAndLoops}~(\ref{noK33withLoops}). } \label{fig:noK33}
\end{figure}

First, suppose that all of the $e_{ij}$ have color~$1$. 
In this case, we give $v_1$ the triple~$(2,1,1)$, $v_2$ the triple~$(1,2,1)$, and $v_3$ the triple~$(1,1,2)$. 
Additionally, we give color~$1$ to the loop at each $u_i$.

Second, suppose that the $e_{ij}$ have exactly two colors. 
Without loss of generality, let $c(e_{12}) = c(e_{31}) = 1$ and $c(e_{23}) = 2$.  
Observe that in $G$, $v_1$ is incident to two edges with color~$1$, while $v_2$ and $v_3$ are each incident to one edge with color~$1$ and one edge with color~$2$.  
We give the triple~$(1, 1, 2)$ to~$v_1$, $(2,2,2)$ to~$v_2$, and~$(1,1,1)$ to~$v_3$.  
Additionally, we give color~$1$ to the loops at $u_1$ and $u_2$ and color~$2$ to the loop at $u_3$.

Third, suppose that all of the $e_{ij}$ have different colors.  
Without loss of generality, let $c(e_{12}) = 1$, $c(e_{23}) = 2$, and $c(e_{31}) = 3$. 
In this case, we give the triple~$(1,1,1)$ to~$v_1$ and~$(2,2,2)$ to both $v_2$ and $v_3$. 
Additionally, we give color~$2$ to the loop at each $u_i$.

In all three cases, we have produced a $(4, 1)$-coloring of~$G$, which is a contradiction.

\medskip

(\ref{no4LoopNeighbors}) Suppose that $u \in V(G)$ is adjacent to vertices $v_1$, $v_2$, $v_3$, and~$v_4$, all of which have loops.  
By part~(\ref{noLoopAdjacence}), the $v_i$ form an independent set.  
By part~(\ref{noLoopDoubleIncidence}), there is only one edge between $u$ and each $v_i$.  
Delete the $v_i$ and form a new $5$-regular graph $G'$ by adding two loops at $u$, and, for each $i$ such that $v_i$ has only one loop, adding an edge~$e_i$ between the two vertices of $N(v_i) \setminus \{u\}$.  
By hypothesis, $G'$ has a $(4, 1)$-coloring.  
We extend this coloring to~$G$ as follows: for each $v_i$ with only one loop, we give all edges incident to $v_i$, except for $u v_i$, the same color as $e_i$.  
We then give each $u v_i$ the color of the loops incident to $u$ in $G'$, which we may assume is a new color.  
Finally, if any of the $v_i$ have two loops, we give these loops a color different from the color of~$u v_i$. 
Thus, $G$ has a $(4, 1)$-coloring, which is a contradiction.

\medskip

(\ref{Forbidden4}) The proof of this statement is computationally assisted but can be checked by hand with extensive case work. 
An exhaustive search shows that there exist only seven graphs on $4$ vertices with at least~$8$ edges and with maximum degree~$5$ that satisfy parts (\ref{no4regular}-\ref{noLoopAdjacence}) of this theorem (see Figure~\ref{fig:Forbidden4}).
Moreover, all seven graphs have exactly $8$ edges.

Let $H$ be a subgraph of $G$ with $4$ vertices and $8$ edges. 
Since $G$ is $5$-regular, there are $4$ edges between $H$ and $G-H$. 
Let $U = \{u,v,w,x\}$ be the multiset of vertices in $H$, 
where the multiplicity of a vertex in $U$ equals the number of edges between the vertex and $G-H$. 
Let $U' = \{u',v',w',x'\}$ be the corresponding neighbors in $G-H$.

In each of the seven present graphs, 
$H$ has a $1$-factor $M$ and there are two distinct vertices in $U$, $u$ and $v$ (without loss of generality), so that $H-\{u,v\}$ has a non-loop edge $e$.
Define the $5$-regular graph $G' = (G-H) + \{u'v',w'x'\}$, which has fewer vertices than $G$.
By our assumption of vertex-minimality, $G'$ has a $(4,1)$-coloring~$c$.
We can then define a $(4,1)$-coloring of $G$ as follows.
Let $c_1 = c(u'v')$ and $c_2 = c(w'x')$.
If $c_1 = c_2$, use a new color for the edges in the one-factor $M$ and use $c_1$ for all other edges incident to a vertex in $H$.
If $c_1 \neq c_2$, use $c_1$ for $uu'$, $vv'$, and $e$ and use $c_2$ for all other edges incident to a vertex in $H$. \qedhere

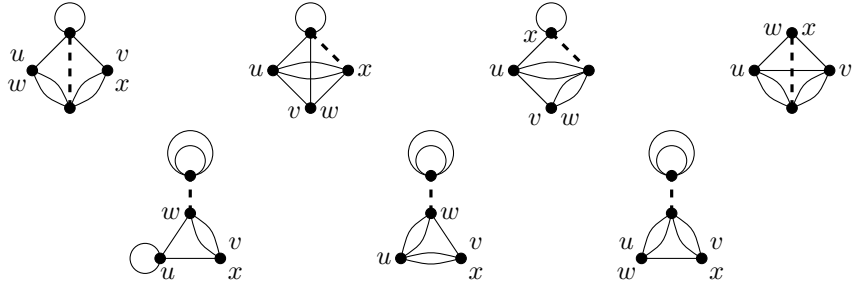
\begin{figure}[ht]
	\centering

	\begin{tikzpicture}
	
		\begin{scope}[xshift=16mm, yshift=-19mm]
			\draw[dashed,very thick] (0,0) -- (0,.5);
			\draw (0,.8) circle(.3);
			\draw (0,.7) circle(.2);
			\foreach \w in {-1,1}{
				\draw (.4,-.6) .. controls(.2-.1*\w,-.3-.1*\w) .. (0,0);
			}
			\draw (0,0) -- (-.4,-.6) -- (.4,-.6);
			\draw (-.6,-.6) circle(.2);
			\draw (.6,-.6) node[above]{$v$} node[below]{$x$};
			\filldraw (-.4,-.6) circle(.07);
			\draw (-.3,-.6) node[below]{$u$};
			\filldraw (.4,-.6) circle(.07);
			\filldraw (0,0) circle(.07) node[left]{$w$};
			\filldraw (0,.5) circle(.07);
		\end{scope}
	
		\begin{scope}[xshift=48mm, yshift=-19mm]
			\draw[dashed,very thick] (0,0) -- (0,.5);
			\draw (0,.8) circle(.3);
			\draw (0,.7) circle(.2);
			\foreach \w in {-1,1}{
				\draw (-.4,-.6) .. controls(-.2+.1*\w,-.3-.1*\w) .. (0,0);
				\draw (-.4,-.6) .. controls(0,-.6+.1*\w) .. (.4,-.6);
			}
			\draw (0,0) -- (.4,-.6);
			\draw (.6,-.6) node[above]{$v$} node[below]{$x$};
			\filldraw (-.4,-.6) circle(.07) node[left]{$u$};
			\filldraw (.4,-.6) circle(.07);
			\filldraw (0,0) circle(.07) node[right]{$w$};
			\filldraw (0,.5) circle(.07);
		\end{scope}
	
		\begin{scope}[xshift=80mm, yshift=-19mm]
			\draw[dashed,very thick] (0,0) -- (0,.5);
			\draw (0,.8) circle(.3);
			\draw (0,.7) circle(.2);
			\foreach \w in {-1,1}{
				\draw (-.4,-.6) .. controls(-.2+.1*\w,-.3-.1*\w) .. (0,0);
				\draw (.4,-.6) .. controls(.2+.1*\w,-.3+.1*\w) .. (0,0);
			}
			\draw (-.4,-.6) -- (.4,-.6);
			\draw (-.6,-.6) node[above]{$u$} node[below]{$w$};
			\draw (.6,-.6) node[above]{$v$} node[below]{$x$};
			\filldraw (-.4,-.6) circle(.07);
			\filldraw (.4,-.6) circle(.07);
			\filldraw (0,0) circle(.07);
			\filldraw (0,.5) circle(.07);
		\end{scope}
	
		\foreach \w in {-.5,.5} {
			\draw (-.5,0) .. controls(-.25-.2*\w,-.25-.2*\w) .. (0,-.5);
			\draw (.5,0) .. controls(.25+.2*\w,-.25-.2*\w) .. (0,-.5);
			\filldraw (0,.5) -- (\w,0) circle(.07);
			\filldraw (0,\w) circle(.07);
		};
		\draw (0,.7) circle(.2);
		\draw[dashed,very thick] (0,-.5) -- (0,.5);
		\draw (-.7,0) node[above]{$u$} node[below]{$w$};
		\draw (.7,0) node[above]{$v$} node[below]{$x$};
		
		\begin{scope}[xshift=32mm]
			\foreach \w in {-.5,.5} {
				\draw (-.5,0) .. controls(0,.3*\w) .. (.5,0);
				\draw (0,-.5) -- (\w,0);
				\filldraw (0,\w) circle(.07);
				\filldraw (\w,0) circle(.07);
			};
			\draw (0,.7) circle(.2);
			\draw (0,-.5) -- (0,.5) -- (-.5,0);
			\draw[dashed,very thick] (0,.5) -- (.5,0);
			\draw (-.5,0) node[left]{$u$};
			\draw (.5,0) node[right]{$x$};
			\draw (0,-.55) node[left]{$v$} node[right]{$w$};
		\end{scope}
		
		\begin{scope}[xshift=64mm]
			\foreach \w in {-.5,.5} {
				\draw (-.5,0) .. controls(0,.3*\w) .. (.5,0);
				\draw (0,-.5) .. controls(.25+.2*\w,-.25-.2*\w) .. (.5,0);
				\filldraw (-.5,0) -- (0,\w) circle(.07);
				\filldraw (\w,0) circle(.07);
			};
			\draw (0,.7) circle(.2);
			\draw[dashed,very thick] (0,.5) -- (.5,0);
			\draw (-.05,.45) node[left]{$x$};
			\draw (-.5,0) node[left]{$u$};
			\draw (0,-.65) node[left]{$v$} node[right]{$w$};
		\end{scope}
		
		\begin{scope}[xshift=96mm]
			\foreach \w in {-.5,.5} {
				\draw (-.5,0) .. controls(-.25-.2*\w,-.25-.2*\w) .. (0,-.5);
				\draw (.5,0) .. controls(.25+.2*\w,-.25-.2*\w) .. (0,-.5);
				\filldraw (0,.5) -- (\w,0) circle(.07);
				\filldraw (0,\w) circle(.07);
			};
			\draw (-.5,0) -- (.5,0);
			\draw[dashed,very thick] (0,-.5) -- (0,.5);
			\draw (-.5,0) node[left]{$u$};
			\draw (.5,0) node[right]{$v$};
			\draw (0,.55) node[left]{$w$} node[right]{$x$};
		\end{scope}
		
	\end{tikzpicture}

	\caption{$4$-vertex, $8$-edge graphs with the vertices in $U$ labeled and an edge in $H - \{u,v\}$ dashed, from the proof of Theorem~\ref{le:5regDoubleEdgesAndLoops}~(\ref{Forbidden4}).} \label{fig:Forbidden4}
\end{figure}

\end{proof}

\subsection{\texorpdfstring{$5$}{5}-regular graphs without  \texorpdfstring{$\{4,1\}$}{\{4,1\}}-factors}\label{se:no-4-1-factor}

The results in this subsection are very similar to those in the previous subsection, so we will omit some of the proofs.
Notice first that every statement of Theorem~\ref{minNo41coloring} also holds for vertex-minimal graphs without $\{4,1\}$-factors because the proofs do not require the use of more than two colors.  

\begin{theo}\label{minNo41factor}
 Let $G$ be a vertex-minimal $5$-regular graph without a $\{4,1\}$-factor.
 \begin{enumerate}[(a)]
  \item $G$ is connected.
  \item $G$ is not $4$-edge-connected, i.e., contains an edge cut on $3$ edges.
  \item $G$ has no minimal edge cut of size $2$.
  \item $G$ does not have two bridges. \label{factorNo2bridges}
  \item Each bridge in $G$ has (precisely) one endpoint incident to two loops. \label{factorSpecial1cut}
  \item The edges of any minimal edge cut of size $3$ in $G$ have a vertex in common, and this vertex is incident to a loop. \hfill \qed
 \end{enumerate}
\end{theo}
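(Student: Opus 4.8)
The plan is to leverage the correspondence noted in the introduction: an $\{4,1\}$-factor of a $5$-regular graph is exactly a $(4,1)$-coloring that uses only two colors. Consequently, saying that $G$ is a vertex-minimal $5$-regular graph without an $\{4,1\}$-factor is the same as saying that $G$ admits no two-color $(4,1)$-coloring while every $5$-regular graph on fewer vertices does. I would therefore take the proof of Theorem~\ref{minNo41coloring} essentially verbatim, replacing the phrase ``$(4,1)$-coloring'' by ``two-color $(4,1)$-coloring'' throughout, and check that each reduction produces a coloring of $G$ that still uses only two colors. Since each coloring $c_i$ supplied by minimality now uses exactly two colors, after relabeling I may assume every such $c_i$ uses the palette $\{1,2\}$; the only thing to verify is that the colorings of the reduced pieces can be \emph{aligned} so that no third color is introduced when they are glued back together.

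Parts (a) and (b) need no extra work. Part (a) is just vertex-minimality. For part (b), Corollary~\ref{corollary:2kEdgeConnected} produces a $1$-factor $M$ when $G$ is $4$-edge-connected, and coloring $M$ with color $1$ and $E(G)\setminus M$ with color $2$ is already a two-color $(4,1)$-coloring, so the same contradiction stands. For parts (c), (d), and (e), each proof splits $G$ along an edge cut or a pair of bridges into two reduced $5$-regular graphs $G_1',G_2'$, chooses $(4,1)$-colorings $c_1,c_2$ of them, and recombines after forcing the two colorings to agree (in color~$1$) on the edges across the cut. Taking $c_1,c_2$ to be two-color colorings on the palette $\{1,2\}$ with the designated edges colored $1$, the recombined coloring of $G$ uses only $\{1,2\}$, so each argument transfers unchanged.

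The step that needs the most care, and which I expect to be the main obstacle, is part (f). There the reduction passes to auxiliary graphs in the families $\mathcal{H}_1,\mathcal{H}_2$ and runs a genuine case analysis on how the ``new'' edges are colored. The key observation to confirm is that the loops attached to the added vertices pin down the two-color structure: at the vertex $p$ (one loop, three new edges) the loop must carry the majority color, forcing exactly one of $c_1(up),c_1(wp),c_1(yp)$ to differ, and at the vertex $q$ (two loops, one further edge) the two loops carry the majority color while $c_2(qz)$ carries the minority. Thus $c_1$ and $c_2$ are already two-colored at precisely the vertices that matter, and in each case of the analysis---whether all new edges share a color, or the split $c_1(up)=c_1(wp)\neq c_1(yp)$ together with $c_2(vx)\neq c_2(qz)$ occurs---one can relabel to the palette $\{1,2\}$ and color the cut edges $uv,wx,yz$ consistently without a third color. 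Verifying this color bookkeeping case by case is the heart of the argument; once it is done, the proof of Theorem~\ref{minNo41coloring}~(f) yields the statement for $\{4,1\}$-factors, completing the proof.
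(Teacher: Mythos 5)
Your proposal is exactly the paper's own argument: the paper proves Theorem~\ref{minNo41factor} simply by observing that none of the proofs in Theorem~\ref{minNo41coloring} ever requires more than two colors, so they transfer verbatim to $\{4,1\}$-factors viewed as two-color $(4,1)$-colorings. Your more detailed bookkeeping---relabeling the palettes to align the colorings across cuts, and the forced loop-majority patterns at the auxiliary vertices $p$ and $q$ in part (f)---is a correct verification of precisely that observation.
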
 

Most of the statements in Theorem~\ref{le:5regDoubleEdgesAndLoops} also hold for vertex-minimal graphs without $\{4,1\}$-factors.  We discuss the differences between Theorems \ref{le:5regDoubleEdgesAndLoops} and~\ref{le:5regDEAL-factor} in Remark~\ref{rk:5regFactorConditions} below.

\begin{theo} \label{le:5regDEAL-factor}
Let $G$ be a vertex-minimal $5$-regular graph without a $\{4,1\}$-factor.
\begin{enumerate}[(a)]
  \item $G$ does not contain a copy of~$K_4$. \label{noK4}
\item $G$ has no parallel non-loop edges. \label{noDouble}
\item No vertices with loops are adjacent.  \label{noLoopAdjacence-factor}
\item $G$ contains at least 5 loops. \label{5LoopMin-factor}
\item There do not exist $u_1$, $u_2$, $u_3$, $v_1$, $v_2$, $v_3 \in V(G)$ such that the $u_i$ have loops and such that for each $i$ and $j$, $u_i$ is adjacent to $v_j$. \label{noK33withLoops-factor}
\end{enumerate}
\end{theo}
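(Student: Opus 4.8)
The guiding principle is that a $\{4,1\}$-factor of a $5$-regular graph $G$ is exactly a $(4,1)$-coloring that uses only two colors: painting the factor edges one color and the rest another gives a $4$--$1$ split at every vertex, and conversely the two color classes of any two-color $(4,1)$-coloring each form a $\{4,1\}$-factor. Hence $G$ has no $\{4,1\}$-factor precisely when it admits no two-color $(4,1)$-coloring, and every reduction in the proof of Theorem~\ref{le:5regDoubleEdgesAndLoops} transfers to the factor setting \emph{provided its extension can be carried out using only the two colors already present on the smaller graph} $G'$, which by vertex-minimality has a $\{4,1\}$-factor, i.e. a two-color $(4,1)$-coloring. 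The plan is thus to revisit each reduction and either confirm it is two-color or repair it.

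Several parts transfer almost verbatim. Part~\ref{5LoopMin-factor} is obtained exactly as \ref{5LoopMin}: the loop count there is derived only from the absence of a perfect matching (a perfect matching is itself a $\{4,1\}$-factor), Tutte's theorem, and the cut conditions of Theorem~\ref{minNo41factor}, none of which uses a third color. For \ref{noLoopAdjacence-factor} I would run the proof of \ref{noLoopAdjacence}, but invoke part~\ref{noDouble} (instead of \ref{noLoopDoubleIncidence}) to guarantee a single edge between the two loop-vertices, finishing with the two-color extension of \ref{3parallel}. For \ref{noK33withLoops-factor}, the key observation is that since $G'$ carries only two colors, the matching edges $e_{12},e_{23},e_{31}$ realize at most two colors, so the three-color Case~3 in the proof of \ref{noK33withLoops} never occurs; Cases~1 and~2 already use two colors and go through. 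The one point needing repair is the independence of the $v_i$, which in \ref{noK33withLoops} came from \ref{no4regular}; since the factor version forbids only $K_4$ (part~\ref{noK4}), I would instead derive independence from the remaining structural conditions or settle the cases where some $v_iv_j$ is present directly by the same two-color extension.

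Part~\ref{noDouble} is \emph{stronger} than its coloring analogues \ref{3parallel}--\ref{noLoopDoubleIncidence}, and here two-color rigidity works in our favor. After excluding triple edges via the two-color reduction of \ref{3parallel} and the loop-incident case via that of \ref{noLoopDoubleIncidence}, suppose $u,v$ are joined by exactly a double edge; then each has three further edges, to $p_1,p_2,p_3$ and $q_1,q_2,q_3$. I would delete $u,v$ and reconnect by the \emph{parallel} matching $\{p_iq_i\}$ to form $G'$. The point of this particular matching is that after coloring each $up_i$ and $vq_i$ with the color of $p_iq_i$, the vertices $u$ and $v$ inherit identical external color distributions, so a short check shows that in every case ($3$--$0$, $2$--$1$, $1$--$2$, $0$--$3$) a single choice of colors on the two parallel edges simultaneously completes a $4$--$1$ split at both $u$ and $v$, contradicting minimality.

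The genuine obstacle is part~\ref{noK4}. The proof of \ref{no4regular} paints the whole regular subgraph a fresh third color, which is exactly what a factor forbids; this is why only $K_4$—whose four vertices each send two edges outside—can be excluded, rather than every $4$-regular subgraph. I would delete the four vertices of the $K_4$, reconnect their eight external stubs by a matching into a smaller $5$-regular graph $G'$, take a $\{4,1\}$-factor of $G'$, and try to extend it by two-coloring the six internal edges. Writing each $K_4$-vertex's forced external colors as $(e_R,e_B)$ with $e_R+e_B=2$, the required number of red internal edges there is $1$, $2$, or (when the two external edges differ) either $0$ or $3$, so completing the coloring means realizing a red subgraph of $K_4$ with these prescribed degrees. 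The hard part is a parity constraint: the prescribed red-degrees must sum to an even number, and for the naive ``pair each vertex's own two neighbors'' matching this fails whenever an odd number of $K_4$-vertices are red-heavy. I expect to overcome this by choosing the reconnection matching adaptively—mixing within-vertex and cross-vertex pairings to create a flexible ``monochromatic-interior'' vertex (red-degree $0$ or $3$) that absorbs the parity—or, if no uniform choice works, by the finite case verification already used for part~\ref{Forbidden4}. This $K_4$ parity bookkeeping is the one place where the factor version genuinely diverges from the coloring version, and is where I would concentrate the effort.
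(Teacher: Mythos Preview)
Your treatment of parts~(\ref{noDouble})--(\ref{noK33withLoops-factor}) lines up with the paper's: those reductions use only two colors, and your matching argument for~(\ref{noDouble}) is the paper's own, just preceded by unnecessary case-splitting. You are also right that the independence of the $v_i$ in~(\ref{noK33withLoops-factor}) no longer follows from a ``no $4$-regular subgraph'' hypothesis; the paper simply asserts the proof carries over verbatim and does not address this, so your flagged repair is warranted.

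The real divergence is part~(\ref{noK4}), and here your proposal has a gap. The external-stub matching hits a genuine parity wall---if three $K_4$-vertices receive two red external edges and the fourth receives two blue, the required internal red-degrees are $(2,2,2,1)$ with odd sum and no flexible vertex to absorb it; even with two flexible vertices the only even-sum target may be $(2,1,3,0)$, which is not realizable inside $K_4$---and your adaptive-matching and brute-force fixes are left unexecuted. The paper sidesteps all of this with a different reduction: it partitions the $K_4$-vertices into pairs $\{u_1,u_2\}$, $\{v_1,v_2\}$, deletes all four, and introduces two \emph{new adjacent} vertices $u,v$, with $u$ inheriting the four external edges of $u_1,u_2$, $v$ those of $v_1,v_2$, and the single edge $uv$ bringing both to degree~$5$. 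This $G'$ has only two fewer vertices, but the $4$--$1$ split at $u$ in any $\{4,1\}$-factor of $G'$ now forces the four $u$-side external edges to split $4$--$0$ or $3$--$1$ (according as $uv$ is minority or majority at $u$), and likewise at $v$. Up to complementing colors there are just three configurations, and each admits a direct two-coloring of the six internal $K_4$-edges. The constraint imposed by $uv$ is precisely what excludes the bad-parity external patterns your matching allows.
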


\begin{remark}\label{rk:5regFactorConditions}
Here, we elaborate on the relationships between the statements in Theorems \ref{le:5regDoubleEdgesAndLoops} and~\ref{le:5regDEAL-factor}.  First, the proofs of Theorem~\ref{le:5regDoubleEdgesAndLoops}~(\ref{no4regular}, \ref{no4LoopNeighbors}) do not work for factors, since in each case, we may need three colors to create the contradictory $(4,1)$-coloring of~$G$.

Next, the proof of Theorem~\ref{le:5regDEAL-factor} (\ref{noK4}) given below does not work for general $(4, 1)$-colorings, because there might be three edges of color~$1$ and one edge of color~$2$ incident to one endpoint of the new edge and with four edges of color~$3$ incident to the other endpoint.  (This corresponds to the last configuration in Figure~\ref{K4removed}, but with a third color assigned to the four lower edges.)  It is not hard to show that it is impossible to extend this coloring to a $(4, 1)$-coloring of the original graph.

Next, we can improve on the condition of Theorem~\ref{le:5regDoubleEdgesAndLoops}~(\ref{3parallel}) to prohibit double edges: see Theorem~\ref{le:5regDEAL-factor}~(\ref{noDouble}).
(We cannot improve the statement for $(4, 1)$-colorings, because, if we try to follow the proof of Theorem~\ref{le:5regDEAL-factor}~(\ref{noDouble}) given below, we may obtain three different colors from the smaller graph~$G'$, making extension to a $(4,1)$-coloring of~$G$ impossible.) 
So, the analogous statements to Theorem~\ref{le:5regDoubleEdgesAndLoops}~(\ref{noDouble3Path},~\ref{noLoopDoubleIncidence}) for factors are merely special cases of forbidding parallel non-loop edges.
Similarly, with no parallel non-loop edges and, by Theorem~\ref{minNo41factor} (\ref{factorNo2bridges}, \ref{factorSpecial1cut}), at most~one double loop, the analogous statement to Theorem~\ref{le:5regDoubleEdgesAndLoops}~(\ref{Forbidden4}) is immediate.

Finally, the proofs of Theorem~\ref{le:5regDoubleEdgesAndLoops}~(\ref{noLoopAdjacence}, \ref{5LoopMin}, \ref{noK33withLoops}), which correspond to Theorem~\ref{le:5regDEAL-factor}~(\ref{noLoopAdjacence-factor}, \ref{5LoopMin-factor}, \ref{noK33withLoops-factor}), work for $\{4,1\}$-factors in exactly the same way, so we will not give the proofs.

\end{remark}

\begin{proof}[Proof of Theorem~\ref{le:5regDEAL-factor}.]

(\ref{noK4}) Let $K=\{u_1, u_2, v_1, v_2\}$ denote the vertices of a copy of~$K_4$ in $G$.
  We obtain a graph~$G'$ by removing all vertices in $K$ from $G$ and adding two adjacent new vertices $u$ and~$v$.  Then, for each $x \notin K$, we add an edge~$xu$ for each edge~$xu_i$ in $G$ and an edge~$xv$ for each edge~$xv_i$ in $G$, $i\in\{1,2\}$.  (Note that this may create multiple edges.)

The new graph~$G'$ is $5$-regular and has fewer vertices than $G$. By the assumption of vertex-minimality, it has a $\{4,1\}$-factor.  
This $\{4,1\}$-factor extends to a $\{4,1\}$-factor of~$G$ regardless of the colors of the edges incident to $u$ and $v$ (see Figure~\ref{K4removed}). 
This is a contradiction.
    
  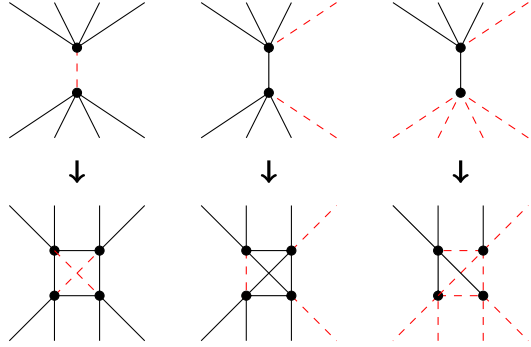
\begin{figure}[h!tb]
   \centering
   
   \begin{tikzpicture}[scale=.3]
   	\begin{scope}
		\foreach \x in {-3,-1,1,3} { 
			\draw (\x,8) -- (0,6); 
			\draw (\x,2) -- (0,4); 
		};
		\draw[dashed,red] (0,6) -- (0,4);
		\filldraw (0,6) circle(2mm);
		\filldraw (0,4) circle(2mm);
		
		\draw [->,very thick] (0,1) -- (0,0);
		
		\foreach \c in {-1,1} {
			\foreach \x in {1,3} { 
				\draw (\c*1,-3) -- (\c*\x,-1);
				\draw (\c*1,-5) -- (\c*\x,-7);
			}; 
			\draw (-1,\c-4) -- (1,\c-4);
			\draw[dashed,red] (-1,-4+\c) -- (1,-4-\c);
			\filldraw (\c,-3) circle(2mm) -- (\c,-5) circle(2mm);
		};
		
	\end{scope}
	
   	\begin{scope}[xshift=85mm]
		\foreach \x in {-3,-1,1} { 
			\draw (\x,8) -- (0,6); 
			\draw (\x,2) -- (0,4); 
		};
		
		\draw[dashed,red] (3,8) -- (0,6);
		\draw[dashed,red] (3,2) -- (0,4);
		\filldraw (0,6) circle(2mm) -- (0,4) circle(2mm);
		
		\draw [->,very thick] (0,1) -- (0,0);
		
		\draw[dashed,red] (-1,-3) -- (-1,-5) ;
		\draw (1,-3) -- (1,-5) ;
		\foreach \c in {-1,1} {
			\draw (\c*1,-3) -- (\c*1,-1);
			\draw (\c*1,-5) -- (\c*1,-7);
			\draw (-1,-4+\c) -- (-3,-4+\c*3);
			\draw[dashed,red] (1,-4+\c) -- (3,-4+\c*3);
			\draw (-1,-4+\c) -- (1,-4-\c);
			\filldraw (-1,\c-4) circle(2mm) -- (1,\c-4) circle(2mm);
		};
		
	\end{scope}
	
   	\begin{scope}[xshift=17cm]
		\foreach \x in {-3,-1,1} { 
			\draw (\x,8) -- (0,6); 
			\draw[dashed,red] (\x,2) -- (0,4); 
		};
		\draw[dashed,red] (3,8) -- (0,6);
		\draw[dashed,red] (3,2) -- (0,4);
		\filldraw (0,6) circle(2mm) -- (0,4) circle(2mm);
		
		\draw [->,very thick] (0,1) -- (0,0);
		
		\draw[dashed,red] (1,-3) -- (3,-1) ;
		\foreach \c in {-1,1} {
			\draw (\c*1,-3) -- (\c*1,-1);
			\draw[dashed,red] (\c,-7) -- (\c,-5);
			\draw[dashed,red] (\c,-5) -- (\c*3,-7);
			\draw[dashed,red] (-1,-5) -- (1,-4+\c);
			\draw[dashed,red] (1,-3) -- (\c,-4-\c);
			\filldraw (-1,-3) -- (\c,-5) circle(2mm);
		};
		\filldraw (1,-3) circle(2mm);
		\filldraw (-3,-1) -- (-1,-3) circle(2mm) ;
		
	\end{scope}
   \end{tikzpicture}
   
   \caption{All possible configurations of a $\{4,1\}$-factor at the edge~$uv$ and corresponding $\{4,1\}$-factors using edges from the copy of~$K_4$ (up to taking complements of color classes), from the proof of Theorem~\ref{le:5regDEAL-factor} (\ref{noK4}).}
   \label{K4removed}
  \end{figure}
  
 (\ref{noDouble}) 
  Assume that there are at least two edges between $u$ and $v$.
  Consider the $5$-regular graph $G'$ obtained from $G$ by removing $u$ and $v$ and adding a matching between $N(u) \setminus \{v\}$ and $N(v) \setminus \{u\}$ (possibly creating parallel edges and loops).
  By assumption, $G'$ has a $\{4,1\}$-factor~$F$.
  We can extend $F$ to a $\{4,1\}$-factor of~$G$ by adding some of the edges between $u$ and $v$ to $F$, which is a contradiction.

\qedhere

  \end{proof}

\section{\texorpdfstring{$r$}{r}-Regular Graphs for \texorpdfstring{$r\geq 6$}{r > 5}} \label{6reg}

In this section we give a negative answer to the analogue of Question~\ref{QUE:4-1-coloring} for $r\geq 6$.
More generally, for each odd $t$ and each even $r$, as well as for each odd $t$ and each odd $r\geq (t+2)(t+1)$, we construct an $r$-regular graph with no $(r - t, t)$-coloring.  Note that for even $t$, every $r$-regular graph has a $(r-t,t)$-coloring and for odd $t\leq \frac{r}{3}$ and even $r$ every $r$-regular graph has a $(r-t,t)$-coloring due to Theorem~\ref{THM:values}.

\begin{theo}\label{theo:regular-no-coloring}
Let $r$ and $t$ be positive integers with $t\leq \frac{r}{2}$ odd.  If $r$ is even or $r\geq (t+2)(t+1)$, then there exists an $r$-regular graph that is not $(r-t,t)$-colorable.	
\end{theo}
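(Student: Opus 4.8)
The plan is to treat the two regimes ($r$ even, and $r$ odd with $r \ge (t+2)(t+1)$) separately, in each case exhibiting an explicit $r$-regular pseudograph and ruling out \emph{every} $(r-t,t)$-coloring by a parity argument that is insensitive to the number of colors used. The engine in both cases is the following observation. In any $(r-t,t)$-coloring, each color class $G_\alpha$ has $\deg_{G_\alpha}(v)\in\{0,t,r-t\}$ at every vertex $v$, with $r-t$ where $\alpha$ is the majority color and $t$ where it is the minority color. When $r$ is even and $t$ is odd, both $t$ and $r-t$ are odd, so by the handshake lemma the set of vertices incident to color $\alpha$ has even size for every $\alpha$. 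When $r$ is odd and $t$ is odd, instead $r-t$ is even, so the odd-degree vertices of $G_\alpha$ are exactly those where $\alpha$ is the minority color; hence each color occurs as a minority color at an even number of vertices. The point of these statements is that they constrain an arbitrary multicolored coloring, not merely a two-coloring.

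For even $r$ the main idea is to generalize the odd double cycle of Lemma~\ref{lemma:Cycles}. First I would take an odd cycle $v_1\cdots v_n$ and replace each edge by a bundle of $r/2$ parallel edges, so that $G$ is $r$-regular with no loops (for $r=4$ this is exactly the odd double cycle). For $t=1$ this already works: at each $v_i$ exactly one incident edge is the single minority edge, so it lies in exactly one of the two bundles at $v_i$; that bundle is non-monochromatic, while the other consists entirely of majority edges and is therefore monochromatic. Since ``being monochromatic'' is a \emph{symmetric} binary invariant of a bundle, and it alternates at every vertex, we would obtain a proper $2$-edge-coloring of the odd cycle $C_n$, a contradiction. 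For general odd $t$ the natural invariant is the parity of the number of minority edge-ends of a bundle: the $t$ minority ends at $v_i$ form an odd number split between the two incident bundles, so those two bundles receive different parities, which again forces alternation. The obstacle is that this parity is computed at a single endpoint, whereas the odd-cycle contradiction needs one value per bundle; a bare bundle of parallel edges does not force equal minority-parity at its two endpoints once $t>1$, so the edges of the cycle must be replaced by a more rigid gadget that locks the two endpoints together.

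For odd $r$ I would build on the non-existence of $\{r-t,t\}$-factors recorded in Theorem~\ref{THM:values}, i.e.\ the construction of Axenovich and Rollin~\cite{AR}, which is exactly where the hypothesis $(t+2)(t+1)\le r$ enters. The task is to promote ``no $\{r-t,t\}$-factor'' (no valid two-coloring) to ``no $(r-t,t)$-coloring'' (no valid coloring at all). The plan is to assemble an odd number of copies of a suitable one-terminal gadget around a cycle and to combine this with the odd-$r$ parity statement above: in each gadget the terminal edge should be forced to be a minority edge, so that the even-ness of each minority count becomes incompatible around the odd ring. I expect the hardest part throughout to be precisely this promotion from two colors to arbitrarily many: the symmetric ``same/different'' (equivalently, minority-parity) invariant must be forced \emph{locally} by the gadget, so that any multicolored coloring collapses, on the relevant bundles, to a proper $2$-coloring of an odd cycle.
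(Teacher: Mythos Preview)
Your proposal has a genuine gap that you yourself flag but do not close: for even $r$ with $t>1$, and for odd $r$ in general, you never actually build the ``more rigid gadget'' that would make your cycle argument go through. Your multi-cycle construction works for $t=1$ (it is exactly the odd double cycle generalized), but for $t>1$ the minority-parity of a bundle at one endpoint need not equal its minority-parity at the other endpoint, since an edge can be a minority edge at one end and a majority edge at the other. So the invariant you want to propagate around the odd cycle is not actually an invariant of the bundle. For odd $r$ your plan is only a sketch: you invoke the Axenovich--Rollin construction and a parity statement, but you give no concrete gadget and no argument that forces the terminal edge to be a minority edge in \emph{every} $(r-t,t)$-coloring.

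The paper's proof sidesteps all of this by exploiting loops, which you avoided entirely. The key observation is that a loop contributes $2$ to the degree of its vertex in whatever color class it lies in; hence at a vertex incident only to loops, the degree in every color class is even, and an odd minority count $t$ is impossible. For even $r$, the single-vertex graph with $r/2$ loops already works. For odd $r\ge(t+2)(t+1)$, the paper takes a central vertex $v$ joined by bundles of $t+2$ parallel edges to each of $t+1$ leaves $u_1,\ldots,u_{t+1}$ (plus one more vertex $u$ absorbing the remaining $r-(t+2)(t+1)$ edges), with enough loops at each $u_i$ and at $u$ to make the graph $r$-regular. Since $v$ has only $t$ minority edges but $t+1$ bundles of size $t+2$, some bundle is entirely the majority color; at that leaf $u_i$ the non-loop edges are monochromatic, so the minority count must come from loops alone, which is impossible for odd $t$. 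No cycle, no propagation, no promotion from two colors to many is needed: the loop parity obstruction is local and kills every coloring at once.
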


Observe that this is the same upper bound on odd $r$ as in Theorem~\ref{THM:values}(b) (due to~\cite{AR}) for the existence of $r$-regular graphs without $\{r-t,t\}$-factors.

\begin{proof}
 First, if $r$ is even, then the $r$-regular graph with one vertex and $\frac{r}{2}$ loops has no $(r-t,t)$-coloring, since $t$ is odd.
 
Now suppose that $r\geq (t+2)(t+1)\geq 6$ is odd.
 Let $G$ be a graph on vertices $v$, $u$, $u_1$, \dots,~$u_{t+1}$ with $t+2$ edges between $v$ and $u_i$ and $\frac{r-t-2}{2}$ loops incident to $u_i$, $1\leq i\leq t+1$, and $r-(t+2)(t+1)\geq 0$ edges between $v$ and $u$ and $\frac{(t+2)(t+1)}{2}$ loops incident to $u$.
 Observe that $G$ is $r$-regular.
 Suppose that $G$ admits an $(r-t,t)$-coloring and observe that in any such coloring, there is an $i$ such that all~$t+2$ edges between $v$ and $u_i$ are of the same color.
 However, this is a contradiction, because there is no coloring of the loops incident to this $u_i$ such that there are exactly~$t$ edges of another color incident to $u_i$, as $t$ is odd.
\end{proof}

Now we will exhibit $r$-regular graphs of even order that have $(r-1,1)$-colorings but not $\{r-1,1\}$-factors. The constructions are similar to constructions in~\cite{LWY}.

\begin{theo}
 For every even $r \geq 6$ there exists an $(r-1,1)$-colorable $r$-regular graph of even order without an $\{r-1,1\}$-factor.
\end{theo}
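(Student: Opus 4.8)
The plan is to exploit the fact, already noted in the paper, that an $\{r-1,1\}$-factor is exactly an $(r-1,1)$-coloring using only two colors: a two-color $(r-1,1)$-coloring partitions $E(G)$ into a spanning subgraph with all degrees in $\{r-1,1\}$ and its complement. So it suffices to build an $r$-regular graph of even order that has a $(r-1,1)$-coloring with three colors but none with two. The device that separates these two regimes is the parity of loops. Since $r$ is even, every vertex carries an even number of loop-incidences, so I would write $G = H + (\text{loops})$, where $H$ is the multigraph of non-loop edges; then every vertex of $H$ has even degree $d_v$, and $G$ arises by adding $(r-d_v)/2$ loops at each $v$.

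Under this description both notions translate cleanly. In any $(r-1,1)$-coloring the minority color at $v$ has degree $1$, which is odd; since each loop contributes an even amount to every color-degree, the single minority edge must be a non-loop edge, and every loop together with all but one of the $H$-edges at $v$ carries the majority color. Hence an $(r-1,1)$-coloring of $G$ is the same thing as a coloring of $E(H)$ in which, at each vertex, exactly one incident edge differs from a common color; and a two-color such coloring is the same as a spanning subgraph $E_1 \subseteq E(H)$ with $\deg_{E_1}(v) \in \{1, d_v-1\}$ for every $v$. The task thus becomes: find an even-order multigraph $H$ with all degrees even that has no such $\{1,d_v-1\}$-factor but does admit a $3$-coloring of the kind above.

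For the construction I would take $H$ to be a disjoint union of two odd cycles (for instance two triangles) and set $G = H + (\text{loops})$ with $(r-2)/2$ loops at each vertex; this is $r$-regular of even order. It has no $\{r-1,1\}$-factor because each component of $H$ is an odd cycle, where $\{1,d_v-1\}=\{1\}$, so a factor would be a perfect matching of an odd cycle; equivalently, each component of $G$ has odd order, so its factor-degrees would all be odd while summing to an even number. It is $(r-1,1)$-colorable because an odd cycle has chromatic index $3$: coloring the two cycle edges at each vertex differently and painting that vertex's loops with the majority color yields an $(r-1,1)$-coloring of $G$ with three colors. I would carry this out for general even $r\ge 6$, emphasizing that the three colors descend from the chromatic index of the odd cycle and that two-colorability fails exactly because of the missing perfect matching.

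The main obstacle is reconciling \emph{even order} with a \emph{genuine} (odd-component-free, ideally connected) obstruction. The odd-cycle frustration that forces a third color naturally lives at odd order, and I expect the delicate point to be that no-factor pulls toward rigidity while colorability pulls toward slack. For example, replacing the two odd components by a single connected $H$ with a high-degree hub (say, a degree-$6$ vertex joined by three double edges to degree-$2$ vertices) does kill the factor, by forcing a forbidden middle degree such as $3$ at the hub, but it simultaneously destroys colorability, yielding instead a non-$(r-1,1)$-colorable graph in the spirit of Theorem~\ref{theo:regular-no-coloring}. A connected example therefore requires the more careful, LWY-type construction underlying Theorem~\ref{THM:values}(a); I would take such an even-order $r$-regular graph without an $\{r-1,1\}$-factor and equip it with a three-color $(r-1,1)$-coloring via the same loop-as-majority device, the crux being to verify that the extra slack of a third color suffices precisely where two colors failed.
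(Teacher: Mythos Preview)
Your construction is correct and complete as soon as you finish the middle paragraph: two disjoint triangles with $(r-2)/2$ loops at each vertex give an $r$-regular pseudograph of even order; each component has odd order so a $\{r-1,1\}$-factor would force an odd degree sum on three vertices; and a proper $3$-edge-coloring of each triangle, with the loops at $v$ receiving whichever of the two incident cycle-colors you declare the majority, is a valid $(r-1,1)$-coloring. Your reduction to colorings of the non-loop multigraph $H$ is sound because the minority color has odd degree $1$ and loops contribute only even amounts. The final paragraph's worries about connectedness and odd components are unnecessary: the theorem as stated asks only for even order, not for a connected or simple graph, so you are already done.

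The paper proceeds quite differently. It never uses loops; instead it glues copies of $K_{r+1}-e$ to one hub vertex (when $r/2$ is odd) or to three hub vertices spanning a $K_3$ (when $r/2$ is even). The lack of a $\{r-1,1\}$-factor comes from the fact that in any $2$-coloring some pair of hub-edges into a single $K_{r+1}-e$ block would match, forcing a $\{r-1,1\}$-factor of $K_{r+1}$, which has odd order; the $3$-coloring is built explicitly on $K_{r+1}$ and propagated. What the paper's approach buys is a connected, simple example; what yours buys is a one-line construction and a transparent parity mechanism. Both are valid proofs of the stated theorem.
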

\begin{proof}
 Note that $K_{r+1}$ has an odd number of vertices and thus does not have an $\{r-1,1\}$-factor, as $r-1$ is odd.
 However, there is an $(r-1,1)$-coloring with $3$ colors.
 Indeed color a copy of $K_{r}$ in red, $r-1$ of the remaining edges blue and the last edge green.

 If $\frac{r}{2}$ is odd, then let $G_1,\ldots, G_\frac{r}{2}$ be vertex-disjoint copies of~$K_{r+1}-e$.
 Form a graph $G$ from the union of $G_i$ by connecting all vertices of degree~$r-1$ in the $G_i$ to a new vertex $u$.
 Then $G$ has an even number of vertices and is $r$-regular.
 Moreover there is an $(r-1,1)$-coloring with $3$ colors.
 Indeed start coloring the edges incident to $u$ and extend the coloring to each $G_i$, $1\leq i\leq\frac{r}{2}$, using the coloring of~$K_{r+1}$ given above.
 Assume that $G$ has an $\{r-1,1\}$-factor, i.e., an $(r-1,1)$-coloring in two colors.
 Then there is an $i$, $1\leq i\leq\frac{r}{2}$, such that both edges between $G_i$ and $u$ are of the same color.
 This yields an $(r-1,1)$-coloring of $K_{r+1}$ in two colors, a contradiction.

 If $\frac{r}{2}$ is even, then let $t=3(\frac{r}{2}-1)$.
 Let $G_1$, \ldots, $G_t$ be vertex-disjoint copies of $K_{r+1}-e$.
 Form a graph~$G$ from the union of the $G_i$ and a disjoint copy of~$K_3$ with vertex set~$\{u_0, u_1, u_2\}$ by connecting both vertices of degree $r-1$ in $G_i$ to $u_j$ if $j(\frac{r}{2}-1)<i\leq (j+1)(\frac{r}{2}-1)$.
 Then $G$ has an even number of vertices and is $r$-regular.
 One can show that $G$ has an $(r-1,1)$-coloring but no $\{r-1,1\}$-factor with arguments similar  to those given above.
\end{proof}

\section{Concluding Remarks}\label{se:remarks}

Here we state a number of open problems related to our work. 
Recall from the Introduction that Tashkinov~\cite{Tashkinov-4-regular} showed that every $4$-regular graph with no multiple edges and at most~one loop contains a $3$-regular subgraph. 
It is not known whether the restriction on the number of loops is necessary.

\begin{question}\label{QUE:4regNoMultiedges}
Does every $4$-regular graph with no multiple edges have a $3$-regular subgraph?
\end{question}

Let us note that Question~\ref{QUE:4regNoMultiedges} is open even for the class of $4$-regular graphs with no multiple edges and at most~two loops.

Our next question concerns $(r - 1, 1)$-colorings with a bounded number of colors. 
Bernshteyn~\cite{B} showed that if $G$ is a $4$-regular graph that has a $(3, 1)$-coloring, then $G$ has a $(3, 1)$-coloring that uses at most~three colors.

\begin{question}\label{QUE:4-1-coloring-bounded}
Is there a positive integer~$K$ such that every $5$-regular graph has a $(4, 1)$-coloring using at most~$K$ colors?
\end{question}

Question~\ref{QUE:4-1-coloring-bounded} lies ``between'' Conjecture~\ref{CONJ:4-1-factor} and Question~\ref{QUE:4-1-coloring} in the following sense.  
An affirmative answer to Question~\ref{QUE:4-1-coloring-bounded} clearly gives an affirmative answer to Question~\ref{QUE:4-1-coloring}.  
On the other hand, as observed in the Introduction, Conjecture~\ref{CONJ:4-1-factor} implies an affirmative answer to Question~\ref{QUE:4-1-coloring-bounded} with $K = 2$.  
Let us also note that none of the proofs of the statements in Theorems \ref{minNo41coloring} and~\ref{le:5regDoubleEdgesAndLoops} required more than~three colors.

Our final question concerns ordered $(r - 1, 1)$-colorings.

\begin{question}\label{QUE:ordered-coloring}
For $r \geq 5$, if $G$ is an $r$-regular graph with an $(r - 1)$-regular subgraph, does $G$ admit an ordered $(r - 1, 1)$-coloring?
\end{question}

As observed in the Introduction, the converse to this statement always holds (see Figure~\ref{Relation}).  
Also, Theorem~\ref{3-regular-ordered-coloring} implies that the corresponding statement is true for $r = 4$.

\section*{Acknowledgments}\label{se:ack}

We are grateful to Maria Axenovich, Sogol Jahanbekam, Yunfang Tang, Claude Tardif, and Torsten Ueckerdt for helpful conversations.


\section*{References}

\bibliographystyle{plain}
\bibliography{SSL-BIB}
\end{document}